\documentclass[conference]{IEEEtran}
\usepackage{times}

% numbers option provides compact numerical references in the text. 
\usepackage[numbers,sort&compress,square]{natbib}
\usepackage{multicol}

\pdfinfo{
   /Author (Homer Simpson)
   /Title  (Robots: Our new overlords)
   /CreationDate (D:20101201120000)
   /Subject (Robots)
   /Keywords (Robots;Overlords)
}

\usepackage{bm}
\usepackage{amsthm,array}
\usepackage{amsfonts,latexsym}
\usepackage{subfig,wrapfig}
\usepackage{graphicx}
\usepackage{times}
\usepackage{psfrag,epsfig}
\usepackage{verbatim}
\usepackage{tabularx}
\usepackage{color}
\usepackage{soul}
\usepackage{indentfirst}
\usepackage{inputenc}
\usepackage{courier}
\usepackage{mathtools}
\usepackage{algorithm,algpseudocode}
\usepackage{graphicx}
\usepackage{caption}
\usepackage[capitalise]{cleveref}
\usepackage{multirow}
\usepackage{tabularx}
\usepackage{color}

\def\0{\boldsymbol{0}}

\def\u{\boldsymbol{u}}

\def\v{\boldsymbol{v}}

\def\T{\mathcal{T}}

\def\I{\text{I}}
\def\d{\text{d}}

\def\R{\mathbb{R}}

\long\def\answer#1{}

\long\def\comment#1{}

\newcommand{\grad}[2]{D_{#1}{#2}}

\theoremstyle{definition}
\newtheorem{definition}{Definition}
\newtheorem{prop}{Proposition}
\newtheorem{lemma}{Lemma}
\newtheorem{corollary}{Corollary}

\def\ad{\text{ad}}

\newcommand{\fb}[2]{{#1}\Big({#2}\Big)}

\begin{document}
% paper title
\title{Online Feedback Control for Input-Saturated Robotic Systems on Lie Groups}

% You will get a Paper-ID when submitting a pdf file to the conference system

\author{\IEEEauthorblockN{Taosha Fan and Todd Murphey}
\IEEEauthorblockA{Department of Mechanical Engineering, Northwestern University\\Email: taosha.fan@u.northwestern.edu, t-murphey@northwestern.edu}}

% avoiding spaces at the end of the author lines is not a problem with
% conference papers because we don't use \thanks or \IEEEmembership

% for over three affiliations, or if they all won't fit within the width
% of the page, use this alternative format:
% 
%\author{\authorblockN{Michael Shell\authorrefmark{1},
%Homer Simpson\authorrefmark{2},
%James Kirk\authorrefmark{3}, 
%Montgomery Scott\authorrefmark{3} and
%Eldon Tyrell\authorrefmark{4}}
%\authorblockA{\authorrefmark{1}School of Electrical and Computer Engineering\\
%Georgia Institute of Technology,
%Atlanta, Georgia 30332--0250\\ Email: mshell@ece.gatech.edu}
%\authorblockA{\authorrefmark{2}Twentieth Century Fox, Springfield, USA\\
%Email: homer@thesimpsons.com}
%\authorblockA{\authorrefmark{3}Starfleet Academy, San Francisco, California 96678-2391\\
%Telephone: (800) 555--1212, Fax: (888) 555--1212}
%\authorblockA{\authorrefmark{4}Tyrell Inc., 123 Replicant Street, Los Angeles, California 90210--4321}}

\maketitle
\begin{abstract}
In this paper, we propose an approach to designing online feedback controllers for input-saturated robotic systems evolving on Lie groups by extending the recently developed Sequential Action Control (SAC). In contrast to existing feedback controllers, our approach poses the nonconvex constrained nonlinear optimization problem as the tracking of a desired negative mode insertion gradient on the configuration space of a Lie group. This results in a closed-form feedback control law even with input saturation and thus is well suited for online application. In extending SAC to Lie groups, the associated mode insertion gradient is derived and the switching time optimization on Lie groups is studied. We demonstrate the efficacy and scalability of our approach in the 2D kinematic car on $\bm{SE(2)}$ and the 3D quadrotor on $\bm{SE(3)}$. {We also implement iLQG on a quadrator model and compare to SAC, demonstrating that SAC is both faster to compute and has a larger basin of attraction.}

%we extend sequential action control from generalized coordinates to Lie groups. Both analysis and numerical examples demonstrate the utility of this approach which works well on a large class of classically-studied systems evolving on Lie groups. Another contribution of this paper is the derivation of mode insertion and transition gradients for systems evolving on Lie groups, which are respectively the basis of sequential action control and switching time optimization. Besides we also explore the geometric meaning of quadratic functions on Lie groups that are constructed through the logarithm map and present exact expressions of $\bm{\mathrm{d}\exp(\xi)}$ and $\bm{\mathrm{d}{\exp}^{-1}(\xi)}$ for $\bm{SE(2)}$, $\bm{SE(3})$ etc., making it possible to use these quadratic functions as objectives in sequential action control as well as optimization on Lie groups.\par
\textit{Index-Terms}--- sequential action control; Lie group; online feedback control; input saturation%; logarithm-based quadratic function; $\bm{\mathrm{d}\exp(\xi)}$; $\bm{\mathrm{d}{\exp}^{-1}(\xi)}$.
\end{abstract}

\IEEEpeerreviewmaketitle

\section{Introduction}
%The motion of a robotic system is highly constrained by its dynamics, kinematics, input saturation etc., therefore, motion planners and controllers are required to satisfy all these constraints simultaneously when performing corresponding tasks. Though these questions have been broadly addressed in motion planning algorithms that compute an open motion plan to reach the goal region \cite{lavalle2006planning}, the resulting motion plan is generally incapable of correcting path deviations yielded by modelling and actuation errors and thus may not be adequate in the absence of suitable feedback controllers.

Dynamic robotic tasks require suitable feedback controllers to simultaneously manage dynamics, kinematics, input saturation etc.. In the last thirty years, a number of techniques to design stabilizing control laws have been developed in robotics and control community, such as feedback linearization and backstepping \cite{khalil1996nonlinear,kokolovie1992joy}, linearzing the dynamics around a nominal trajectory to implement the linear quadratic regulator (LQR) or linear model predicative control (LMPC) \cite{anderson2007optimal,bemporad2002model}, dynamic programming \cite{bertsekas1999nonlinear} and nonlinear model predicative control (NMPC) \cite{allgower2004nonlinear,mayne1966second,todorov2005generalized,tassa2014control}, or feedback motion planning via semidefinite programming to precompute a library of control policies filling up the space of possible initial states \cite{jarvis2003some,tedrake2010lqr,majumdar2014convex}. However, feedback linearization and backstepping are generally inapplicable when actuation limits exist while LQR and LMPC only work in a small neighbourhood of the nominal trajectory. Dynamic programming and NMPC are usually computationally expensive and feedback motion planning via semidefinite programming may need lots of time and memory to compute and store the library of control policies if the state space is high-dimensional and noncompact.\par 

Generally speaking, it is preferable to model robot locomotion systems, e.g., vehicular systems, on Lie groups rather than in coordinates since there are no singularity problems. Despite that numerous feedback controllers for systems evolving on Lie groups have been developed \cite{lee2010geometric,kobilarov2014nonlinear,sreenath2013dynamics,huang2014convex,wu2015safety}, all of them remain in the category of aforementioned techniques. Furthermore, feedback controllers on Lie groups resulted from existing control techniques tend to be much more complicated than their coordinate counterparts and as a result, might not be available for time-critical tasks in the presence of actuation limits. Unfortunately, nearly all robot locomotion tasks are required to manage input saturation while being executed in real time. Hence the application of these techniques \cite{lee2010geometric,kobilarov2014nonlinear,sreenath2013dynamics,huang2014convex,wu2015safety} is severely restricted in practice.\par

In this paper, we address the shortcomings of existing control techniques by proposing a novel approach to designing feedback controllers for systems evolving on Lie groups. Our approach is inspired by the recently developed \textit{Sequential Action Control} (SAC) \cite{ansari2016sac}. In contrast to existing control techniques, SAC improves the long-time performance of the controller by tracking a desired negative \textit{mode insertion gradient} and the resulting control action has a \textit{closed-form solution} whose calculation is almost as inexpensive as simulation. More importantly, control actions in SAC can be efficiently saturated and hence it is well suited for robotic tasks in real time. SAC has solved various benchmark problems, such as cart-pendulum, pendubot, acrobot, spring-loaded inverted
pendulum etc. in \cite{ansari2016sac,murphey-ral2016,ansari2015adhs,wilson2015iros}. Nevertheless, up till now, the application of SAC remains limited to nonlinear systems in coordinates and the purpose of this paper is to extend SAC to Lie groups so that we may synthesize control techniques directly on the Lie groups that are commonly used in robotics.
%\subsection{Relationship to Switching Time Optimization}
%The idea of SAC is inspired by switching time optimization. The general switching time optimization problem is on determining optimal switching time for known switching modes while SAC is about determining the mode to be switched to at given time so that    
\subsection{Relationship to Existing Control Techniques}\label{subsection::relationship_existing}
{The major difference of SAC from other optimization-based nonliner control techniques \cite{allgower2004nonlinear,tassa2014control,bertsekas1999nonlinear,anderson2007optimal,bemporad2002model,jarvis2003some,tedrake2010lqr,majumdar2014convex,mayne1966second} is that expensive iterative computations of gradients and Hessians are avoided. Control inputs in SAC are computed and saturated based on the mode insertion gradient (\cref{eq::mode_ins}), whose argument is the time duration $\lambda$ instead of control inputs $u(t)$, and which can be exactly evaluated (not approximated) by \cref{eq::mode_ins_vector}. Moreover, most gradient-based methods such as DDP or iLQG attempt to update control inputs over the whole control horizon,  whereas SAC only updates the next control input at a specified future time $\tau$ for a duration $\lambda$.}\par
Although SAC performs well for various nonlinear systems both in coordinates and on Lie groups as these shown in \cite{ansari2016sac,murphey-ral2016,ansari2015adhs,wilson2015iros} and \cref{section::lie_sac}, we are not intending to draw an conclusion that SAC is superior to existing control techniques \cite{khalil1996nonlinear,kokolovie1992joy,allgower2004nonlinear,bertsekas1999nonlinear,anderson2007optimal,bemporad2002model,jarvis2003some,tedrake2010lqr,majumdar2014convex,mayne1966second}. Instead we are more interested in how our approach and existing control techniques can complement each other that results in a win-win solution. \par
As far as we know, SAC augments existing control techniques in the following ways:
\begin{enumerate}
\item SAC is implemented in the beginning to drive the nonlinear system to a region where feedback linearization, backstepping, LQR, LMPC etc. are qualified for the final stabilization even with input saturation.
\item Control actions computed by SAC are used to initialize optimization-based techniques, e.g., NMPC, for which a reasonable initialization is of great significance.
\item In combination with feedback motion algorithms via semidefnitie programming \cite{jarvis2003some,tedrake2010lqr,majumdar2014convex}, which requires to precompute a library of control policies, SAC serves as an early-stage online feedback motion planner to drive the nonlinear system to the target set of initial states that are covered by the feedback motion algorithms via semidefinite programming with a library of much smaller size so that precomputational time and storage memory are saved.
\end{enumerate}

\subsection{Contribution and Organization}
The main contribution of this paper is extending the concept of sequential action control from coordinates to Lie groups so that feedback controllers can be constructed accordingly for input-saturated robotics systems evolving on Lie groups. A great advantage of our approach over SAC in coordinates is that our approach is not required to choose coordinates so that the expensive chart-switching and the associated sudden jump are avoided. In addition to extending SAC to Lie groups, switching time optimization on Lie groups is studied and the first order derivative, i.e., the mode transition gradient, is derived. % Besides exact expressions of the trivialized tangent of the exponential map $\text{d}\exp(\xi)$ and its inverse $\text{d}\exp^{-1}(\xi)$ for some common Lie groups are given. These exact expressions enable us to implement logarithm-constructed quadratic objective functions in SAC as well as optimization for systems evolving on Lie groups.% Other possible application of $\text{d}\exp(\xi)$ and $\text{d}\exp^{-1}(\xi)$ includes constructing Lie group integrators \cite{leok2007overview}.\par

The remainder of this paper is organized as following: \cref{section::sac} reviews sequential action control in coordinates and its advantages. \cref{section::switch} studies the switching time optimization for systems evolving on Lie groups and derives the associated mode insertion and transition gradients. \cref{section::quadratic} analyzes the logarithm-constructed quadratic functions on Lie groups and presents exact expressions of $\d\exp(\xi)$ and $\d\exp^{-1}(\xi)$ for $SO(3)$, $SE(2)$ and $SE(3)$. \cref{section::lie_sac} demonstrates the efficacy and scalability of our approach on two classic robotic systems evolving on Lie groups and \cref{section::conclusion} concludes the paper.% In Appendix A, exact expressions of $\d\exp(\xi)$ and $\d\exp^{-1}(\xi)$ for some common Lie groups are derived and presented.
%%%%%%%%%%%%%%%%%%%%%%%%%%%%%%%%%%%%%%%%%%%%%%%%%%%%%%%%%%%%%%%%%%%%%%%%%%%%%%%%%%%%%%%%%%%%%%%%%%
\section{Sequential Action Control}\label{section::sac}
Sequential Action Control is an online model-based approach to high-quality trajectory planning and tracking for nonlinear systems\cite{ansari2016sac}. Compared with general NMPC methods which minimizes the objective iteratively over a finite horizon while enforcing system dynamics as constraints, SAC computes control actions by tracking a desired negative mode insertion gradient, which is a control methodology common in hybrid systems. Since control actions in SAC are assumed to be applied in an infinitesimal duration $\lambda\rightarrow 0^+$, there exists a closed-form solution to control actions having actuation limits which renders SAC well suited for online application. In this section, we review SAC in coordinates and discuss how to extend SAC to Lie groups.\par
\subsection{An Overview of Sequential Action Control}
Consider the affine nonlinear system 
\begin{equation}\label{eq::affine}
\fb{f}{x(t),u(t);t}=\fb{g}{x(t);t}+\fb{h}{x(t);t}\cdot u(t)
\end{equation}
where the state $x(t)\in \R^n$, control action $u(t)\in \R^m$, $\fb{g}{x(t);t}\in\R^n$ and $\fb{h}{x(t);t}\in \R^{n\times m}$. Let 
$$J_1=\int_{T_0}^{T_N} \fb{L}{g(\tau)}\; d\tau +\fb{\varphi}{g(T_N)}$$ be the objective and
suppose the dynamics $\fb{f}{x(t),u(t);t}$ first switches from the nominal mode $$\fb{f_1}{x(t);t}=\fb{f}{x(t),u_1(t);t}$$ to the new mode $$\fb{f_2}{x(t);t}=\fb{f}{x(t),u_2(t);t}$$ at time $\tau_0$ and then back to $\fb{f_1}{x(t),u_1(t);t}$ again after a duration of $\lambda$, i.e., 
\begin{equation} \label{eq::switch_input}
u(t)=\begin{cases}
u_1(t) & t>\tau_0+{\lambda}\;\text{ or } \; t \leq \tau_0,\\
u_2(t) & \tau_0\leq t < \tau_0+{\lambda}.
\end{cases}
\end{equation}
If the duration $\lambda\rightarrow 0^+$ is infinitesimal, the \textit{mode insertion gradient} $\left.\dfrac{dJ_1}{d\lambda}\right|_{0^+}$ is defined to be
\begin{equation}\label{eq::mode_ins}
 \left.\dfrac{dJ_1}{d\lambda}\right|_{0^+}=\lim_{\lambda\rightarrow 0^+}\dfrac{\Delta J_1}{\lambda }
\end{equation}
where $\Delta J_1$ is the objective change yielded by this switch. The model insertion gradient $\left.\dfrac{dJ_1}{d\lambda}\right|_{0^+}$ can be calculated exactly with \cref{eq::mode_ins_vector} \cite{egerstedt2003optimal}
\begin{equation}\label{eq::mode_ins_vector}
\left.\dfrac{dJ_1}{d\lambda}\right|_{0^+}=\rho(\tau_0)^T\cdot \Big[\fb{f_2}{x(\tau_0);\tau_0}- \fb{f_1}{x(\tau_0);\tau_0} \Big]
\end{equation}
where $\rho(t)$ are costates satisfying
\begin{equation}\label{eq::mode_ins_co}
\dot{\rho}(s)^T=-\rho(s)^T \cdot D_x \fb{f_1}{x(s);s}- D_x L\Big(x(s)\Big)
\end{equation}
\begin{equation*}
\text{subject to: } \rho(T_N)=D_g\varphi \Big(g(T_N)\Big).
\end{equation*}
By continuity, the objective change $\Delta J_1$ can be approximated with
$$\Delta J_1\approx \left.\dfrac{dJ_1}{d\lambda}\right|_{0^+}\cdot \lambda$$
if the duration $\lambda$ is small albeit not infinitesimal. So for sufficiently small $\lambda>0$, $J_1$ is reduced in the switch of \cref{eq::switch_input} as long as the mode insertion gradient $\left.\dfrac{dJ_1}{d\lambda}\right|_{0^+}$ is negative.\par

The idea of SAC is to compute control actions $u^*(t)$ reducing the objective $J_1$ by tracking a desired negative mode insertion gradient $\left.\dfrac{dJ_1}{d\lambda}\right|_{0^+}=\alpha_d$ instead of optimizing $J_1$ iteratively. In tracking the negative mode insertion gradient $\alpha_d$, SAC incorporates feedback and computes control actions that guarantee an optimal/near-optimal reduction of $J_1$, which eventually results in a long-time improvement.\par 
The mode insertion gradient tracking can be formulated as an adjoint optimization problem
\begin{multline}\label{eq::J2}
%\begin{aligned}
u_2^*(t)=\arg \min_{u_2(t)}\int_{T_0}^{T_N} \!\!\Bigg[\left(\left.\dfrac{dJ_1}{d\lambda}\right|_{0^+}\!\!-\alpha_d\right)^2 + \\ \|u_2(s)\!-\!u_1(s)\|_R^2 \Bigg]\!ds
%\end{aligned}
\end{multline}
where $R\in \R^{m\times m}$ is symmetric positive-definite. For affine system \cref{eq::affine}, it has been proved that \cref{eq::J2} has a closed-form solution
\begin{equation}\label{eq::u2_optimal}
u_2^*(t)=u_1+(\Lambda+R)^{-1}\fb{h}{x(t);t}^T\rho(t)\alpha_d
\end{equation}
where $\Lambda:= \fb{h}{x(t);t}^T \rho(t)\rho(t)^T \fb{h}{x(t);t}$ \cite{ansari2016sac}.

As shown in \cref{algorithm::SAC}, control actions can be saturated in SAC and this is discussed in detail in \cref{subsection::saturation}. For these not discussed, such as determining mode insertion time $\tau_0$ and control duration $\lambda$, interested readers can refer to \cite{ansari2016sac} for a complete introduction.\par

{\begin{algorithm}[t]
\caption{Sequential Action Control}\label{algorithm::SAC}
\begin{algorithmic}[1]
\State Simulate states and costates $(x(t),\,\rho(t))$ with nominal control dynamics $f_1(t)$
\State Compute $u_2^*(t)$ from $(x(t),\,\rho(t))$ by
$$u_2^*(t)=u_1+(\Lambda+R)^{-1}\fb{h}{x(t);t}^T\rho(t)\alpha_d $$
%\begin{multline*}
%%\begin{aligned}
%u_2^*(t)=\arg \min_{u_2(t)}\int_{T_0}^{T_f} \!\!\Bigg[\left(\left.\dfrac{dJ_1}{d\lambda}\right|_{0^+}\!\!-\alpha_d\right)^2 + \\ \|u_2(s)\!-\!u_1(s)\|_R \Bigg]\!ds
%%\end{aligned}
%\end{multline*}
\State Modify $u_2^*(t)$ to satisfy the control bounds $u_{\min}$ and $u_{\max}$
\State Determine the mode insertion time $\tau_0$ to maximize the negative mode insertion gradient
\State Determine the duration $\lambda$ to apply $u_2^*(t)$
\end{algorithmic}
\end{algorithm}
}
\subsection{Saturating Control Actions}\label{subsection::saturation}
If actuation limits exist, the adjoint optimization problem \cref{eq::J2} can be reformulated by quadratic programming with 
$$u_{\min}(t)\leq u_2^*(t) \leq u_{\max}(t) $$
imposed as constraints. This is not preferable considering the high computational expense. Instead, if $R\in \R^{m\times m}$ is diagonal and let ${u'}_2^{*}(t)\!=\!u_1+(\Lambda+R)^{-1}\fb{h}{x(t);t}^T\rho(t)\alpha_d $, then $u_2^*(t)$ can be saturated elementwise by
\begin{equation}\label{eq::u2_solution}
[u_2^*(t)]_i
=\begin{cases}
[u_{\min}]_i & [{u'}_2^*(t)]_i< [u_{\min}]_i,\\
[{u'}_2^*(t)]_i & [u_{\min}]_i\leq[{u'}_2^*(t)]_i\leq [u_{\max}]_i,\\
[u_{\max}]_i & [{u'}_2^*(t)]_i> [u_{\max}]_i.
\end{cases}
\end{equation}
Despite that $u_2^*(t)$ determined by \cref{eq::u2_solution} may not be optimal to \cref{eq::u2_optimal}, it works very well in practice and can guarantee a reduction of $J_1$ if $u_{\min}<u_1(t)<u_{\max}$. Most importantly, the computational expense introduced by \cref{eq::u2_solution} is almost negligible in contrast to quadratic programming.

%\subsection{Smoothing Control Actions}
%SAC has a tendency to exert the analytical bang-bang solution to $u_2^*(t)$ when there are actuation limits \cite{murphey-ral2016}. However, sometimes $u_2^*(t)$ may switch between $u_{\min}$ and $u_{\max}$ too frequently which is problematic for actuator implementation. One method to handle that is to require $\frac{d}{dt}{u}_2^*(t)\leq \Delta u$ and add a low-pass filter to the calculated control actions. This method can be numerically realized with ease and the performance of smoothed control actions is still quite satisfactory. 
\subsection{Extension of Sequential Action Control to Lie Groups}
As we can see from the overview above, to extend sequential action control to Lie groups, the first problem is calculating the mode insertion gradient on Lie groups. Unfortunately, \cref{eq::mode_ins_vector,eq::mode_ins_co} may no longer hold for nonabelian Lie groups, meaning that the mode insertion gradient needs to be rederived. We study this problem in \cref{section::switch}, which is the fundamental contribution of this paper. Another problem, though not so obvious, is choosing appropriate objective functions. Though quadratic functions can be constructed easily on Lie groups through the logarithm map, it is difficult to evaluate their derivatives if there are no exact expressions of the trivialized tangent inverse of the exponential map $\d\exp^{-1}(\xi)$, which is discussed in detail in \cref{section::quadratic}.

%Another question, though seemingly not so important, is choosing appropriate cost functions. In $\R^n$ (or more specifically, generalized coordinates), the quadratic functions has so many nice properties that it is the most popular choice of cost function in not only optimization but also sequential action control. There is no wonder that quadratic functions remain attractive for SAC and optimization on Lie groups. Thus how to construct quadratic function Lie groups in reasonable way so that it can be used as objective in SAC is another focus of this paper. 

%From the perspective of Lie group theory and differential geometry, positive-definite quadratic functions have great meanings: the square root of quadratic function measures the \textit{distance}, i.e. the minimal length of all curves, connecting two points in $\R^n$ in certain \textit{left-invariant} Riemannian metric. The existence of Riemannian metric 

%In generalized coordinates, a common choice is the quadratic cost function, however, in Lie group, it is usually difficult to   

%In the next two sections, we will work on these problems to extend SAC to Lie groups.

%%%%%%%%%%%%%%%%%%%%%%%%%%%%%%%%%%%%%%%%%%%%%%%%%%%%%%%%%%%%%%%%%%%%%%%%%%%%%%%%
\section{Mode Insertion and Transition Gradients for Systems Evolving on Lie groups}\label{section::switch}
In this section we study the switching time optimization for systems evolving on Lie groups and derive the associated mode insertion and transition gradients. Readers are assumed to have a basic knowledge of Lie group theory, which can be found in a variety of textbooks, e.g., \cite{hall2003lie}.\par
\subsection{Review of Lie Group Dynamics and Linearization}\label{subsection::dyn_lin}
To be fully-illustrated, we first define the following operator that are frequently used in this paper. 
\begin{definition}
Given a differentiable map $f:G\longrightarrow \R^n$ on a Lie group $G$, then $\grad{g}{f}|_g: \mathfrak{g}\longrightarrow \R^n$ is defined to be the linear map such that
\begin{equation}
\grad{g}{f}|_g\cdot \eta=\left.\frac{d}{d s}\fb{f}{g\cdot\exp(s\cdot\eta)}\right|_{s=0}
\end{equation}
for all $\eta\in \mathfrak{g}$.
\end{definition}
If $G$ is abelian, then $\grad{g}{f}|_g$ is just the first-order derivative in multi-variable calculus. For brevity, the dependence $|_g$ is dropped and $\grad{g}{f}|_g$ is simply written as $\grad{g}{f}$.\par

%Next we will review Lie group dynamics and its linearization.
Let $G$ be the Lie group and $\mathfrak{g}$ the associated Lie algebra, the continuous system evolving on $G$ is defined to be %\cref{eq::cont_dyn}
\begin{subequations}\label{eq::cont_dyn}
\begin{equation}
\dot{g}(t)=g(t){\xi(t)}.
\end{equation}
\begin{equation} \label{eq::cont_dyn_1}
{\xi}(t)=\fb{f}{g(t),u(t);t}
\end{equation}
\end{subequations}
where $f:G\times \mathcal{U}\times \R^+\longrightarrow \mathfrak{g}$. In the remainder of the paper, we may sometimes write $\fb{f}{g(t),u(t);t}$ as $f(t)$ for simplicity.\par
If $\fb{f}{g(t),u(t);t}$ is $C^1$, it is possible to linearize the corresponding system, which is as follows \cite{saccon2013optimal}.\par
\begin{prop}\label{prop::linear}
If $f:G\times \mathcal{U}\times \R^+\longrightarrow \mathfrak{g}$ is differentiable, then a linearization of continuous system \cref{eq::cont_dyn} is 
\begin{equation}\label{eq::lin}
\dot{\eta}(t)=\Big(\grad{g}{f}(t)-\ad_{f(t)} \Big)\cdot \eta(t)+\grad{u}{f}(t)\cdot \nu(t)
\end{equation}
where $\eta(t)=g(t)^{-1}\delta g(t)\in \mathfrak{g}$ and $\nu(t)\in \mathcal{U}$ is the variation of inputs $u(t)$.
\end{prop}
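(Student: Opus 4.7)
The plan is to derive the evolution equation for the left-trivialized variation $\eta(t) = g(t)^{-1}\delta g(t)$ by introducing a smooth one-parameter family of perturbed trajectories and equating two different expressions for the mixed derivative of $g_\epsilon(t)$ in $t$ and $\epsilon$.

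Concretely, I would let $g_\epsilon(t)$ solve $\dot g_\epsilon(t) = g_\epsilon(t)\fb{f}{g_\epsilon(t), u(t)+\epsilon\nu(t);t}$ with $g_0(t) = g(t)$, and set $\delta g(t) := \partial_\epsilon g_\epsilon(t)|_{\epsilon = 0}$. Using left-translation to identify $T_{g(t)}G$ with $\mathfrak{g}$, the relation $\delta g = g\eta$ differentiated in $t$ yields $\dot{\delta g} = g\xi\eta + g\dot\eta$, where $\xi := f(g,u;t)$. On the other hand, differentiating the dynamics $\dot g_\epsilon = g_\epsilon \xi_\epsilon$ in $\epsilon$ at $0$ produces $\delta\dot g = \delta g\cdot\xi + g\cdot \delta\xi = g\eta\xi + g\delta\xi$. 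Equating $\dot{\delta g} = \delta\dot g$ and cancelling $g$ from the left gives
\begin{equation*}
\dot\eta \;=\; \delta\xi + \eta\xi - \xi\eta \;=\; \delta\xi - \ad_\xi\eta.
\end{equation*}

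To finish, I would expand $\delta\xi = \delta f$ using the definition of $\grad{g}{f}$ stated just before the proposition. Along the perturbed curve $g_\epsilon(t) = g(t)\exp(\epsilon\eta(t) + o(\epsilon))$, the state contribution to the variation of $f$ is exactly $\grad{g}{f}\cdot\eta$ by definition, while the input slot contributes the ordinary directional derivative $\grad{u}{f}\cdot\nu$. Substituting $\delta\xi = \grad{g}{f}\cdot\eta + \grad{u}{f}\cdot\nu$ and $\ad_\xi = \ad_{f(t)}$ into the previous display recovers \cref{eq::lin}.

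The main obstacle is to justify the matrix-style shorthand ``$g\xi\eta$'' and ``$g\eta\xi$'' on a possibly abstract (non-matrix) Lie group, where these products must be interpreted through left-translation and the Lie bracket on $\mathfrak{g}$ rather than as ordinary multiplication. A clean resolution is to work from the outset on the left-trivialized tangent bundle $G\times \mathfrak{g}$: define $\eta$ intrinsically by $\delta g = (L_g)_*\eta$, then compute $\partial_t\eta$ using commutativity of the mixed partials $\partial_t\partial_\epsilon = \partial_\epsilon\partial_t$ together with the standard identity that pulling a time derivative through left-translation of a curve with body velocity $\xi$ introduces the correction $\ad_\xi$. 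Once this bookkeeping is in place, the computation above is rigorous and yields the claimed linearization.
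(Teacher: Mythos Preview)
The paper does not actually prove \cref{prop::linear}; it states the result and attributes it to \cite{saccon2013optimal}, so there is no in-paper argument to compare against. Your derivation is correct and is the standard variational computation for left-trivialized linearizations on Lie groups: equate the two mixed partials of $g_\epsilon(t)$, obtain $\dot\eta=\delta\xi-\ad_\xi\eta$, and then expand $\delta\xi$ via the chain rule using the paper's definition of $\grad{g}{f}$. Your caveat about the matrix shorthand is well placed; restricting to matrix Lie groups (which covers all the examples in the paper) or passing to the left-trivialized bundle as you describe makes the argument rigorous.
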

%In the remainder of this section, we derive mode insertion and transition gradients for systems evolving on Lie groups. 
For more details about Lie group dynamics and linearization, interested readers can refer to \cite{marsden1999introduction,marsden1992lectures,bullo2005geometric,fan2015structured}.\par
\subsection{Mode Transition Gradient}
\begin{definition}
If the inputs $u(t)\in \mathcal{U}$ in \cref{eq::cont_dyn} are transition times 
$$\mathcal{T}=\begin{bmatrix}
T_1 & T_2 & \cdots & T_{N-1}
\end{bmatrix}^T,$$
such that
\begin{multline}\label{eq::input}
\fb{f}{g(t),\mathcal{T};t}=\Big[1(t-T_0)-1(t-T_1^-)\Big]\fb{f_1}{g(t);t}+\\
\Big[1(t-T_1^+)-1(t-T_2^-)\Big]\fb{f_2}{g(t);t}
+\cdots\cdots +\\
\Big[1(t-T_{N-1}^+)-1(t-T_N^-)\Big]\fb{f_N}{g(t);t}
\end{multline}
where $T_0\leq T_1\leq\cdots\leq T_{N-1}\leq T_N$ and $1(t):\R\longrightarrow \{0,\,1\}$ is the unit step function, then \cref{eq::cont_dyn} is a \textit{switched system on Lie group} $G$ and $\fb{f_i}{g(t);t}:G\times\R^+\longrightarrow \mathfrak{g}$ are \textit{mode functions}. \par
\end{definition}
The optimization problem to determine transition times for switched system is called \textit{switching time optimization}. The first- and second-order derivatives of switching time optimization can be evaluated exactly for switched system in coordinates \cite{egerstedt2006transition,caldwell2011switching,egerstedt2003optimal}. However, to our knowledge, no work has been done for switching time optimization on Lie groups. Though switching time optimization is not the main focus of this paper, the \textit{mode transition gradient}, i.e. the first-order derivative of the switching time optimization, is closely related with the mode insertion gradient that SAC tracks in computing control actions $u_2^*(t)$. Here we derive the mode transition gradient for systems evolving on Lie groups that is used to derive the modal insertion gradient in \cref{subsection::mig}. 

\begin{lemma} Suppose $J(\mathcal{T})$ is an objective function of the switched system evolving on Lie group $G$ defined by \cref{eq::cont_dyn,eq::input}
\begin{equation}\label{eq::cost}
J(\mathcal{T})=\int_{T_0}^{T_N} \fb{L}{g(\tau)}\; d\tau +\fb{\varphi}{g(T_N)}.
\end{equation}
Provided each $f_i(\cdot;\cdot)$ in $f$ is $C^1$, then the mode transition gradient $\left.\dfrac{dJ}{d\T}\right|_\mathcal{T}$ of $J(\T)$ is
$$\left.\dfrac{dJ}{d\T}\right|_\T
=\left.\begin{bmatrix}
\left.\dfrac{dJ}{dT_1}\right. & \left.\dfrac{dJ}{dT_2}\right. & \cdots & \left.\dfrac{dJ}{dT_{N-1}}\right.
\end{bmatrix}^T\right|_\T 
$$
such that
\begin{equation}\label{eq::lie_trans}
\left.\frac{dJ}{dT_i}\right|_\mathcal{T}= \rho(T_i)^T \Big[\fb{f_i}{g(T_i);T_i}-\fb{f_{i+1}}{g(T_i),T_i}\Big]
\end{equation}
where
 \begin{multline}\label{eq::costate_diff}
\dot{\rho}(s)^T=-\rho(s)^T \cdot\Big[D_g \fb{f}{g(s),\mathcal{T};s}-\\
 \ad_{f(g(s),\T;s)}\Big] - D_g L\Big(g(s)\Big)
\end{multline}
\begin{equation*}
\text{subject to: } \rho(T_N)=D_g\varphi \Big(g(T_N)\Big).
\end{equation*}
\end{lemma}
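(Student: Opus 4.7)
The plan is to compute $dJ/dT_i$ by perturbing $T_i \mapsto T_i + \delta T_i$ (holding all other switching times fixed) and tracking how the first-order variation $\eta(t) = g(t)^{-1}\delta g(t) \in \mathfrak{g}$ propagates through the cost. First I would observe that to first order the perturbation only alters the dynamics on the infinitesimal sliver $[T_i,\,T_i+\delta T_i]$, over which $f_{i+1}$ is effectively replaced by $f_i$. Integrating $\dot g = g\xi$ across this sliver and trivializing on the left yields the initial jump
\begin{equation*}
\eta(T_i^+) = \bigl[f_i(g(T_i);T_i) - f_{i+1}(g(T_i);T_i)\bigr]\,\delta T_i,
\end{equation*}
while for $t > T_i$ the variation obeys the linearization of \cref{prop::linear} with $\nu(t)\equiv 0$, namely $\dot\eta = (\grad{g}{f} - \ad_f)\,\eta$. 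By assumption the other $T_j$ are held fixed, so no additional jumps arise at later switching instants and $\eta$ remains continuous there.

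Next I would expand the cost to first order,
\begin{equation*}
\delta J = \int_{T_i}^{T_N} \grad{g}{L}(g(\tau))\,\eta(\tau)\,d\tau + \grad{g}{\varphi}(g(T_N))\,\eta(T_N),
\end{equation*}
and eliminate the integral by introducing the costate $\rho(t)$ defined by the adjoint ODE \cref{eq::costate_diff} with terminal condition $\rho(T_N)^\top = \grad{g}{\varphi}(g(T_N))$. Differentiating $\rho^\top\eta$ along the trajectory and substituting both the adjoint equation and the linearization, the cross terms $\rho^\top(\grad{g}{f} - \ad_f)\eta$ cancel by construction, leaving $\tfrac{d}{dt}(\rho^\top\eta) = -\grad{g}{L}(g(t))\,\eta(t)$. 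Integrating this identity from $T_i^+$ to $T_N$ and combining with the terminal condition on $\rho$ collapses the entire expression for $\delta J$ to the single boundary term $\rho(T_i)^\top\eta(T_i^+)$, which upon substituting the jump initial condition yields $dJ/dT_i = \rho(T_i)^\top[f_i(g(T_i);T_i) - f_{i+1}(g(T_i);T_i)]$, precisely the claimed formula.

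The main obstacle, relative to the coordinate proof of Egerstedt \ea, is handling the $\ad_f$ correction in the left-trivialized linearization: on a nonabelian Lie group $\eta$ does not simply evolve under $\grad{g}{f}$, and one must verify that the \emph{same} $-\ad_f$ term appears in the adjoint equation so that the cancellation in $\tfrac{d}{dt}(\rho^\top\eta)$ goes through cleanly. A secondary bookkeeping issue is justifying the infinitesimal-sliver computation rigorously, i.e.\ that higher-order-in-$\delta T_i$ contributions from both the evolution of $\eta$ on the sliver and from the nonlinear expansion of $\grad{g}{L}$ and $\grad{g}{\varphi}$ are genuinely negligible; continuity of $f$, $L$, and $\varphi$ together with the $C^1$ hypothesis on each $f_i$ make this routine but must be checked.
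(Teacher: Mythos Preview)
Your argument is correct and rests on the same variational/adjoint idea as the paper, but the execution differs in a way worth noting. The paper keeps the perturbation inside the forcing term: it writes the full inhomogeneous linearized equation $\dot\eta = (\grad{g}{f}-\ad_f)\eta + D_{T_i}f$, observes that $D_{T_i}f$ is a pair of Dirac deltas at $T_i$, solves via the state transition matrix $\Phi(t,s)$, substitutes into $\delta J$, and then swaps the order of integration to \emph{define} $\rho(s)$ by an explicit integral of $\grad{g}{L}\,\Phi(\cdot,s)$; the costate ODE is obtained afterwards by differentiating that integral. You instead absorb the delta forcing into a jump initial condition $\eta(T_i^+)=(f_i-f_{i+1})\,\delta T_i$, posit the adjoint ODE from the outset, and collapse the cost via the product-rule identity $\tfrac{d}{dt}(\rho^\top\eta)=-\grad{g}{L}\,\eta$. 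Your route is arguably cleaner because it never introduces $\Phi$ and makes the cancellation of the $\ad_f$ term explicit, which is exactly the Lie-group-specific point; the paper's route has the minor advantage of producing a closed-form expression for $\rho$ that one could in principle evaluate directly. Both are standard adjoint derivations and lead to the identical formula.
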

\begin{proof}
%The proof is similar to \cite{caldwell2010adjoint}.\par
The derivative $\left.\dfrac{dJ}{dT_i}\right|_\T$ of $J(\T)$ w.r.t $T_i$ is
\begin{multline}\label{eq::variation}
\left.\frac{dJ}{dT_i}\right|_\mathcal{T}=\int_{T_0}^{T_N} D_g L\Big(g(\tau)\Big)\cdot D_{T_i}g(\tau)d\tau + \\
D_g\varphi \Big(g(T_N)\Big)\cdot D_{T_i}g(T_N)
\end{multline}
where $D_{T_i}g(t)=g(t)^{-1}\dfrac{d g(t)}{dt}\in \mathfrak{g}$ and, according to \cref{eq::lin}, satisfies
\begin{multline}\label{eq::first_lin}
\frac{\partial}{\partial t}\Big(D_{T_i}g(t)\Big)=\\ \Big(D_g \fb{f}{g(t),\mathcal{T};t}- \ad_{f(g(t),\T;t)}\Big)\cdot D_{T_i}g(t) + \\ D_{T_i} \fb{f}{g(t),\mathcal{T};t}.
\end{multline}
Note that ${{f_i}}(\cdot;\cdot)$ is $C^1$, thus $\fb{D_gf}{g(t),\T;t}$ is well defined. Since that all initial states are fixed, we have 
$$D_{T_i}g(0)=\0.$$
Therefore, the solution to \cref{eq::first_lin} is
\begin{equation}\label{eq::solution}
D_{T_i}g(t)=\int_{T_0}^{T_N} \Phi(t,s)\cdot  D_{T_i} \fb{f}{g(s),\mathcal{T};s} ds,
\end{equation}
where $\Phi(t,s)$ is the state transition matrix of \cref{eq::first_lin}.
Substitute \cref{eq::solution} to \cref{eq::variation} and switch the integration order, $\left.\dfrac{dJ}{dT_i}\right|_\mathcal{T}$ becomes
\begin{equation}\label{eq::first_1}
\left.\dfrac{dJ}{dT_i}\right|_\mathcal{T}=\int_{T_0}^{T_N} \rho(s)^T\cdot D_{T_i}\fb{f}{g(s),\T;s}ds
\end{equation}
and the costate $\rho(s)^T$ is
\begin{multline}\label{eq::costate}
\rho(s)^T=\int_{s}^{T_N} D_g L\Big(g(\tau)\Big) \Phi(\tau,s) d\tau + \\ D_g\varphi \Big(g(T_N)\Big)\Phi(T_N,s).
\end{multline}
Taking derivative on both sides of \cref{eq::costate}, we have
\begin{multline}
\dot{\rho}(s)^T=-\rho(s)^T \Big[D_g \fb{f}{g(s),\mathcal{T};s}- \\ \ad_{f(g(s),\T;s)}\Big] -D_g L\Big(g(s)\Big),
\end{multline}
$$ \text{subject to: } \rho(T_N)=D_g\varphi \Big(g(T_N)\Big),$$
In addition, note that
\begin{multline}\label{eq::dtf}
D_{T_i} \fb{f}{g(t),\mathcal{T};t}=\delta(t-T_i^-)\fb{f_i}{g(t);t}-\\ \delta(t-T_i^+)\fb{f_{i+1}}{g(t);t}.
\end{multline}
Integrate $\delta$-functions in \cref{eq::first_1} and the result is \cref{eq::lie_trans},
which completes the proof.
\end{proof}

\subsection{Mode Insertion Gradient}\label{subsection::mig}
We have obtained the mode transition gradient for switching time optimization on Lie groups, with which the mode insertion gradient can be derived just as the following corollary indicates.
\begin{corollary}
Given a system evolving on Lie group $G$ with $f: G\times \mathcal{U}\times \R^+\longrightarrow \mathfrak{g}$ taking the form
\begin{equation}\label{eq::transition}
\fb{f}{g(t),\lambda; t}
=\begin{cases}
\fb{f_1}{g(t);t}, & t<\tau_0 \text{\;\;\;or\;\;\;} t\geq\tau_0+\lambda, \\
\fb{f_2}{g(t);t}, & \tau_0\leq t< \tau_0+\lambda
\end{cases}
\end{equation}
and the duration $\lambda\in\R^+$ as input and an objective function 
\begin{equation}\label{eq::lam_cost}
J(\lambda)=\int_{T_0}^{T_N} \fb{L}{g(\tau)}\; d\tau +\fb{\varphi}{g(T_N)},
\end{equation}
then the mode insertion gradient is
\begin{equation}\label{eq::insert}
\left.\frac{d J}{d\lambda} \right|_{\lambda=0^+} = \rho(\tau_0)^T \cdot \Big[\fb{f_2}{g(\tau_0);\tau_0}- \fb{f_1}{g(\tau_0);\tau_0} \Big]
\end{equation}
where
 \begin{multline}\label{eq::costate_diff}
\dot{\rho}(s)^T=-\rho(s)^T \cdot\Big[D_g \fb{f_1}{g(s);s}-\\
 \ad_{f_1(g(s);s)}\Big] - D_g L\Big(g(s)\Big)
\end{multline}
\begin{equation*}
\text{subject to: } \rho(T_N)=D_g\varphi \Big(g(T_N)\Big).
\end{equation*}
\end{corollary}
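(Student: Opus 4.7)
The plan is to recognize the corollary as a direct specialization of the preceding mode transition gradient lemma, applied to a three-mode switched system constructed from the single insertion. Specifically, I would view $f$ in \cref{eq::transition} as a switched system on $G$ with three mode functions $\tilde{f}_1 = f_1$, $\tilde{f}_2 = f_2$, $\tilde{f}_3 = f_1$ and transition times $\T = (T_1, T_2)^T = (\tau_0, \tau_0 + \lambda)^T$. Under this identification, $T_1$ is held fixed at $\tau_0$ while $T_2 = \tau_0 + \lambda$ moves with $\lambda$, so by the chain rule
\[
\left.\frac{dJ}{d\lambda}\right|_{\lambda=0^+} = \left.\frac{dJ}{dT_2}\right|_{\T=(\tau_0,\tau_0)^T}.
\]

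Next, I would invoke \cref{eq::lie_trans} of the preceding lemma with $i = 2$, which yields
\[
\left.\frac{dJ}{dT_2}\right|_{\T} = \rho(T_2)^T\Big[\fb{\tilde{f}_2}{g(T_2);T_2} - \fb{\tilde{f}_3}{g(T_2);T_2}\Big].
\]
Substituting $\tilde{f}_2 = f_2$, $\tilde{f}_3 = f_1$, and evaluating at $\lambda = 0^+$ (so $T_2 = \tau_0$) gives precisely \cref{eq::insert}. The costate equation \cref{eq::costate_diff} then follows from the lemma because, in the limit $\lambda \to 0^+$, the mode $f_2$ is never active on a set of positive measure, so the aggregate dynamics $\fb{f}{g(s),\lambda;s}$ collapses to $\fb{f_1}{g(s);s}$ throughout $[T_0,T_N]$. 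The terminal condition $\rho(T_N)=D_g\varphi(g(T_N))$ is inherited unchanged.

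The main subtlety to handle carefully is justifying continuous dependence of the costate and state trajectories on $\lambda$ at $\lambda = 0^+$, so that evaluating the lemma's formula in the limit is legitimate. Because each $f_i$ is assumed $C^1$ and the switch occurs over a vanishing interval, the state $g(\tau_0)$ at the insertion time is unaffected by $\lambda$ to first order, and the backward costate \cref{eq::costate} depends continuously on the forward trajectory through the state transition matrix $\Phi(t,s)$; both facts carry over verbatim from the argument used in the lemma's proof. The remainder of the argument --- handling the $\delta$-function terms in $D_{T_2}\fb{f}{g(t),\T;t}$ analogous to \cref{eq::dtf} --- is already done inside the lemma, so no further work is needed.

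If a direct derivation were preferred over reduction to the lemma, I would repeat the variational calculation of \cref{eq::variation,eq::first_lin,eq::solution,eq::first_1} with $\lambda$ in place of $T_i$, using $D_\lambda \fb{f}{g(t),\lambda;t} = \delta(t - \tau_0 - \lambda)\bigl[\fb{f_2}{g(t);t} - \fb{f_1}{g(t);t}\bigr]$ and evaluating at $\lambda = 0^+$. However, the reduction approach is cleaner and avoids duplicating the linearization and adjoint machinery already established.
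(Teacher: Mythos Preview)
Your proposal is correct and follows essentially the same route as the paper: identify the insertion as a switched system, note that $dJ/d\lambda = dJ/dT_2$ with $T_2=\tau_0+\lambda$, apply the lemma's formula \cref{eq::lie_trans} at the second switch, and evaluate at $\lambda=0^+$. The paper writes this more tersely (it records only the moving transition time, $\T=[\tau_1]$, rather than your explicit three-mode $(T_1,T_2)=(\tau_0,\tau_0+\lambda)$ framing) and does not spell out the continuity argument for the costate reduction to $f_1$, but the underlying mechanism is identical.
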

\begin{proof}
Let $\tau_1=\tau_0+\lambda$ and the dynamical system defined by \cref{eq::transition} is a switched system with $\T=[\tau_1]$. Apply Lemma 1 and note $ \dfrac{dJ}{d\lambda}=\dfrac{dJ}{d\tau_1}$, we have
\begin{multline}\label{eq::mode}
\frac{dJ}{d\lambda}=\rho(\tau_0+\lambda)^T \cdot\Big[\fb{f_2}{g(\tau_0+\lambda);\tau_0+\lambda}- \\
 \fb{f_1}{g(\tau_0+\lambda);\tau_0+\lambda} \Big].
\end{multline}
Substituting $\lambda=0^+$ into \cref{eq::mode}, the resulting equation is \cref{eq::insert}, which completes the proof.
\end{proof}
It can be shown that \cref{eq::u2_optimal} and \cref{eq::u2_solution} apply to SAC on Lie groups as long as the mode insertion gradient \cref{eq::insert} is given which is crucial in extending SAC to nonlinear systems evolving on Lie groups.

%In this section, we review Lie group dynamics and linearization, and derive mode transition and insertion gradients for systems evolving on Lie groups. In particular, the derivation of mode insertion gradient is of great significance making it impossible to extend SAC to Lie groups.

%%%%%%%%%%%%%%%%%%%%%%%%%%%%%%%%%%%%%%%%%%%%%%%%%%%%%%%%%%%%%%%%%%%%%%%%%%%%%%%%
\section{The Quadratic Function on Lie Groups and its Derivative}\label{section::quadratic}
The \textit{quadratic function} in $\R^n$
\begin{equation}\label{eq::quad_Rn}
f(x)=\frac{1}{2}\|x-x_d\|_M^2
\end{equation}
where $\|x\|_M=\sqrt{x^T M x}$ and $M\in\R^{n\times n}$ is symmetric positive definite defines a class of functions frequently used in control theory and application. Their essence as Lyapunov functions and easiness in numerical computation render quadratic objective functions quite popular for both SAC and optimization in $\R^n$. This suggests it may be necessary to reasonably extend quadratic functions to Lie groups.\par
\subsection{Extending Quadratic Functions to Lie Groups}
An immediate and common way in differential geometry to extend the quadratic function \cref{eq::quad_Rn} in $\R^n$ to Lie groups is
\begin{equation}\label{eq::quad_lie}
f(g)=\frac{1}{2} \|\log(g_d^{-1}g)\|_M^2
\end{equation}   
whose derivative is
\begin{equation}\label{eq::quad_grad}
D_g f=\d\exp^{-1}\Big(-\log(g_d^{-1}g)\Big)^T M  \log(g_d^{-1}g),
\end{equation}
where $\d\exp^{-1}(\xi)$ is the trivialized tangent inverse of the exponential map.\par 

If Lie group $G$ has a \textit{bi-invariant pesudo-Riemannian metric}, \cref{eq::quad_lie} is a reasonable extension of quadratic functions to Lie groups in sense that $\log(g_d^{-1}g)$ is the initial velocity of the geodesic $\gamma:\,[0,\,1]\longrightarrow G$ connecting $g_d$ and $g$ under the bi-invariant pesudo-Riemannian metric. Fortunately, there always exists such bi-invariant pesudo-Riemannian metrics for subgroups of $SE(3)$ \cite{zefran1996choice}, some of which actually turn out to be positive-definite, i.e., Riemannian metrics.%Besides as objective functions, $\d\exp(\xi)$ and $\d\exp^{-1}(\xi)$ are widely used in constructing and linearizing Lie group integrators.

Though there are a number of other ways to extend quadratic functions from $\R^n$ to Lie groups, such as these in \cite{kobilarov2014discrete,saccon2013optimal}, none of them have the same geometric meaning as \cref{eq::quad_Rn} in $\R^n$ that captures the bi-invariant pesudo-Riemannian structure. Most importantly, according to numerical tests, when using \cref{eq::quad_lie} as the objective, SAC on Lie groups has a larger region of attraction and converges a lot faster than objectives of any other kinds.
\subsection{Expressions of ${\mathrm{d}\exp(\xi)}$ and ${\mathrm{d}\exp^{-1}(\xi)}$ for Some Common Lie Groups}\label{section::dexps}
The trivialized tangent of the exponential map and its inverse $\d\exp(\xi)$ and $\d\exp^{-1}(\xi)$ are frequently used in the construction and linearization of Lie group integrators \cite{saccon2013optimal,hairer2006geometric}. Since $\|\xi\|$ is usually small in Lie group integrators, it is possible to approximate $\d\exp(\xi)$ and $\d\exp^{-1}(\xi)$ by truncating the series expansions \cite{hairer2006geometric}
%\begin{subequations}\label{eq::dexps}
\begin{equation*}\label{eq::dexp}
\d\exp(\xi)=\sum_{j=0}^\infty \frac{1}{(j+1)!}\text{ad}_\xi^j,\;\;\d\exp^{-1} (\xi)=\sum_{j=0}^\infty \frac{B_j}{j!}\text{ad}_\xi^j
\end{equation*}
%\begin{equation}\label{eq::dexpinv}
%\d\exp^{-1} (\xi)=\sum_{j=0}^\infty \frac{B_j}{j!}\text{ad}_\xi^j.
%\end{equation}
%\end{subequations}
where $B_j$ are Bernoulli numbers. However, when $\|\xi\|$ is no longer small, e.g., in an objective function \cref{eq::quad_lie}, the approximation by truncating series expansions is invalid.\par

Here we give exact expressions (not series expansions) of $\d\exp(\xi)$ and $\d\exp^{-1}(\xi)$ for $SO(3)$, $SE(2)$ and $SE(3)$, all of which are commonly used in robotics.
\subsubsection{$SO(3)$} Elements of Lie algebra $\hat{\omega}\in\mathfrak{so}(3)$ is usually associated with $\R^3$ through the hat operator $\wedge:\R^3\longrightarrow \mathfrak{so}(3)$ %\cite{murray1994mathematical},
$$\begin{bmatrix}
\omega_1\\
\omega_2\\
\omega_3
\end{bmatrix}^\wedge=\begin{bmatrix}
0 & -\omega_3 &\omega_2\\
\omega_3 & 0 &-\omega_1\\
-\omega_2 & \omega_1 & 0
\end{bmatrix}, $$
then $\d\exp(\hat{\omega})$ and $\d\exp^{-1}(\hat{\omega})$ are \cite{celledoni2003lie}
\begin{subequations}\label{eq::dexps_so3}
\begin{equation}\label{eq::dexp_so3}
\d\exp(\hat{\omega})=\I+\dfrac{1-\cos\|\omega\|}{\|\omega\|^2}\hat{\omega}+\dfrac{\|\omega\|-\sin\|\omega\|}{\|\omega\|^3}\hat{\omega}^2,
\end{equation}
\begin{equation}
\d\exp^{-1}(\hat{\omega})\!=\I-\!\dfrac{1}{2}\hat{\omega}+\dfrac{\frac{1}{2}\|\omega\|\sin\|\omega\|+\cos\|\omega\|-1}{\|\omega\|^2(\cos\|\omega\|-1)}\hat{\omega}^2.
\end{equation}
\end{subequations}
\subsubsection{$SE(2)$} We represent $\xi\in\mathfrak{se}(2)$ in terms of coordinate $\begin{bmatrix}
\omega\\u\\v
\end{bmatrix}\in\R^3$ such that
$
\xi=\begin{bmatrix}
0  &  -\omega & u\\
\omega & 0 & v\\
0 & 0 &0
\end{bmatrix}.
$ The exact expressions of $\d\exp(\hat{\omega})$ and $\d\exp^{-1}(\hat{\omega})$ are
\begin{subequations}\label{eq::dexps_se2}
\begin{equation}
\d\exp(\xi)
=\begin{bmatrix}
1 & 0 & 0\\
\#1 & \dfrac{\sin\omega}{\omega} & -\dfrac{1-\cos\omega}{\omega}\\
\#2 & \dfrac{1-\cos\omega}{\omega} & \dfrac{sin\omega}{\omega}
\end{bmatrix}
\end{equation}
\begin{equation}
\d\exp^{-1}(\xi)
={\begin{bmatrix}
1 & 0 & 0\\
\#3 & \dfrac{\omega}{2}\dfrac{\sin\omega}{1-\cos\omega} & \dfrac{\omega}{2}\\
\#4 & -\dfrac{\omega}{2} & \dfrac{\omega}{2}\dfrac{\sin\omega}{1-\cos\omega}
\end{bmatrix}}
\end{equation}
\end{subequations}
where
\begin{equation*}
\begin{aligned}
&\#1=\dfrac{u(\omega-\sin\omega)+v(1-\cos\omega)}{\omega^2} ,\\
&\#2=\dfrac{v(\omega-\sin\omega)-u(1-\cos\omega)}{\omega^2}, \\
& \#3=-\dfrac{v}{2}+\dfrac{u}{2}\dfrac{\omega\sin\omega+2\cos\omega-2}{\omega\cos\omega-\omega},\\
&\#4=\dfrac{u}{2}+\dfrac{v}{2}\dfrac{\omega\sin\omega+2\cos\omega-2}{\omega\cos\omega-\omega}.
\end{aligned}
\end{equation*}
\subsubsection{$SE(3)$} It is general to identify $\xi\in\mathfrak{se}(3)$ with the body-fixed velocity $\begin{bmatrix}
\omega\\
v
\end{bmatrix}\in\R^3$ through
$\xi=\begin{bmatrix}
\hat{\omega} & v\\
\0 & 0
\end{bmatrix}
$ where $\omega$, $v\in \R^3$ are respectively the body-fixed angular and linear velocities, and we have
\begin{subequations}\label{eq::dexps_se3}
\begin{equation}\label{eq::dexp_se3_1}
\d\exp(\xi)=\begin{bmatrix}
\# 1 & \mathbf{O}_{3\times 3}\\
\# 2 & \#1
\end{bmatrix},
\end{equation}
\begin{equation}\label{eq::dexp_se3_2}
\d\exp^{-1}(\xi)=\begin{bmatrix}
\# 3 & \mathbf{O}_{3\times 3}\\
\# 4 & \#3
\end{bmatrix}
\end{equation}
\end{subequations}
where 
\begin{equation*}
\#1=\I+\dfrac{1-\cos\|\omega\|}{\|\omega\|^2}\hat{\omega}+\dfrac{\|\omega\|-\sin\|\omega\|}{\|\omega\|^3}\hat{\omega}^2,
\end{equation*}
\begin{multline*}
\#2=\dfrac{2-2\cos\|\omega\|-\frac{1}{2}\|\omega\|\sin\|\omega\|}{\|\omega\|^2}\hat{v}+\\
\dfrac{\|\omega\|-\sin\|\omega\|}{\|\omega\|^3}(\hat{\omega}\hat{v}+\hat{v}\hat{\omega})+\\
%\dfrac{\cos\|\omega\|+\frac{1}{2}\|\omega\|^2-1}{\|\omega\|^4}(\hat{\omega}^2\hat{v}+\hat{v}\hat{\omega}^2)+\\
\dfrac{1-\cos\|\omega\|-\frac{1}{2}\|\omega\|\sin\|\omega\|}{\|\omega\|^4}(\hat{\omega}^2\hat{v}+\hat{\omega}\hat{v}\hat{\omega}+\hat{v}\hat{\omega}^2)+\\
\dfrac{\|\omega\|-\frac{3}{2}\sin\|\omega\|+\frac{1}{2}\|\omega\|\cos\|\omega\|}{\|\omega\|^5}(\hat{\omega}^2\hat{v}\hat{\omega}+\hat{\omega}\hat{v}\hat{\omega}^2)%+\\
%\dfrac{\frac{1}{2}\|\omega\|^2+\frac{1}{2}\|\omega\|\sin\|\omega\|+2\cos\|\omega\|-2}{\|\omega\|^6}\hat{\omega}^2\hat{v}\hat{\omega}^2,
\end{multline*}
\begin{equation*}
\#3=\I-\dfrac{1}{2}\hat{\omega}+\dfrac{\frac{1}{2}\|\omega\|\sin\|\omega\|+\cos\|\omega\|-1}{\|\omega\|^2(\cos\|\omega\|-1)}\hat{\omega}^2,
\end{equation*}
\begin{multline*}
\#4=-\dfrac{1}{2}\hat{v}+
\dfrac{\frac{1}{2}\|\omega\|\sin\|\omega\|+\cos\|\omega\|-1}{\|\omega\|^2(\cos\|\omega\|-1)}(\hat{\omega}\hat{v}+\hat{v}\hat{\omega})+\\
\dfrac{\frac{1}{4}\|\omega\|^2+\frac{1}{4}\|\omega\|\sin\|\omega\|+\cos\|\omega\|-1}{\|\omega\|^4(\cos\|\omega\|-1)}(\hat{\omega}^2\hat{v}\hat{\omega}+\hat{\omega}\hat{v}\hat{\omega}^2).
\end{multline*}
\cref{eq::dexps_se3} is derived based on the fact that $\hat{\omega}^3=-\|\omega\|^2\hat{\omega}$, in which \cref{eq::dexp_se3_1} is equivalent to that presented in \cite{selig2004lie}.
%%%%%%%%%%%%%%%%%%%%%%%%%%%%%%%%%%%%%%%%%%%%%%%%%%%%%%%%%%%%%%%%%%%%%%%%%%%%%%%%
\section{Examples}\label{section::lie_sac}
In this section, we implement our approach on the 2D kinematic car and 3D quadrotor respectively evolving on $SE(2)$ and $SE(3)$. The results indicate that our approach has a large region of attraction with input saturation. Furthermore, the computation is much faster than real time meaning that our approach can be implemented online. We also compare the performance of SAC with iLQG for quadrotor in in terms of stability and computational time. {All the tests are run in C++ on a 2.7GHz Intel Core i7 Thinkpad T440p laptop}. \par
\subsection{Example 1: The 2D Kinematic Car}
The configuration space of a kinematic car evolves on $g\in SE(2)$ and the dynamic equation can be described as
\begin{subequations}\label{eq::car_model}
\begin{equation}
\dot{g}=g\begin{bmatrix}
0 & -\omega & v^\parallel\cos(\phi)\\
\omega & 0 & 0\\
0 & 0 &0
\end{bmatrix},
\end{equation}
\begin{equation}
{\omega}=v^\parallel\sin(\phi),\;\dot{v}^\parallel=u_1,\;\dot{\phi}=u_2
\end{equation}
\end{subequations}
where $\omega$ is the angular velocity, $v^\parallel$ is the forward velocity, $\phi$ is the steering angle and $u_1,\,u_2\in \R$ are the control inputs.\par

Here we implement SAC for feedback motion planning of car parking. In our tests, the control bounds are $u_{\min}=\begin{bmatrix}
-4 &-5
\end{bmatrix}^T$ and $u_{\max}=\begin{bmatrix}
4 &5
\end{bmatrix}^T$, and to be more consistent with a real car, the steering angle $\phi$ is constrained to $[-\frac{1}{3}\pi,\,\frac{1}{3}\pi]$. The reference control inputs $u_1(t)$ are unknown in motion planning and thus we assume $u_1(t)\equiv\0$ in SAC. However, in parallel parking, it can be checked that $\fb{h}{g(t);t}^T\!\!\rho(t)\equiv\0$ if the weighting matrix $M$ in \cref{eq::quad_lie} is diagonal and the control action $u_2^*(t)\equiv \0$ by \cref{eq::u2_optimal}. Therefore, the car is stuck at the initial state despite the fact that it remains far away from the target. To help the car jump out of the singularity, we let off-diagonal elements of $M$ vary slightly \textit{by random} at intervals instead of keeping $M$ unchanged and then $u_2^*(t)$ no longer remains $\0$.\footnote{$M$ remains positive definite since off-diagonal elements merely slightly vary around 0.} As shown in \cref{fig::car_parking}, this works well for the parallel parking problem.\footnote{The desired steering angle $\phi_d$ is not specified as is seldom considered in motion planning.} \par 

\begin{figure}[]\label{fig::car_parking}
\centering
\vspace{-0.5em}
\begin{tabular}{cc}
  \subfloat[][]{\includegraphics[trim =6mm 4mm 12mm 6mm,width=0.25\textwidth]{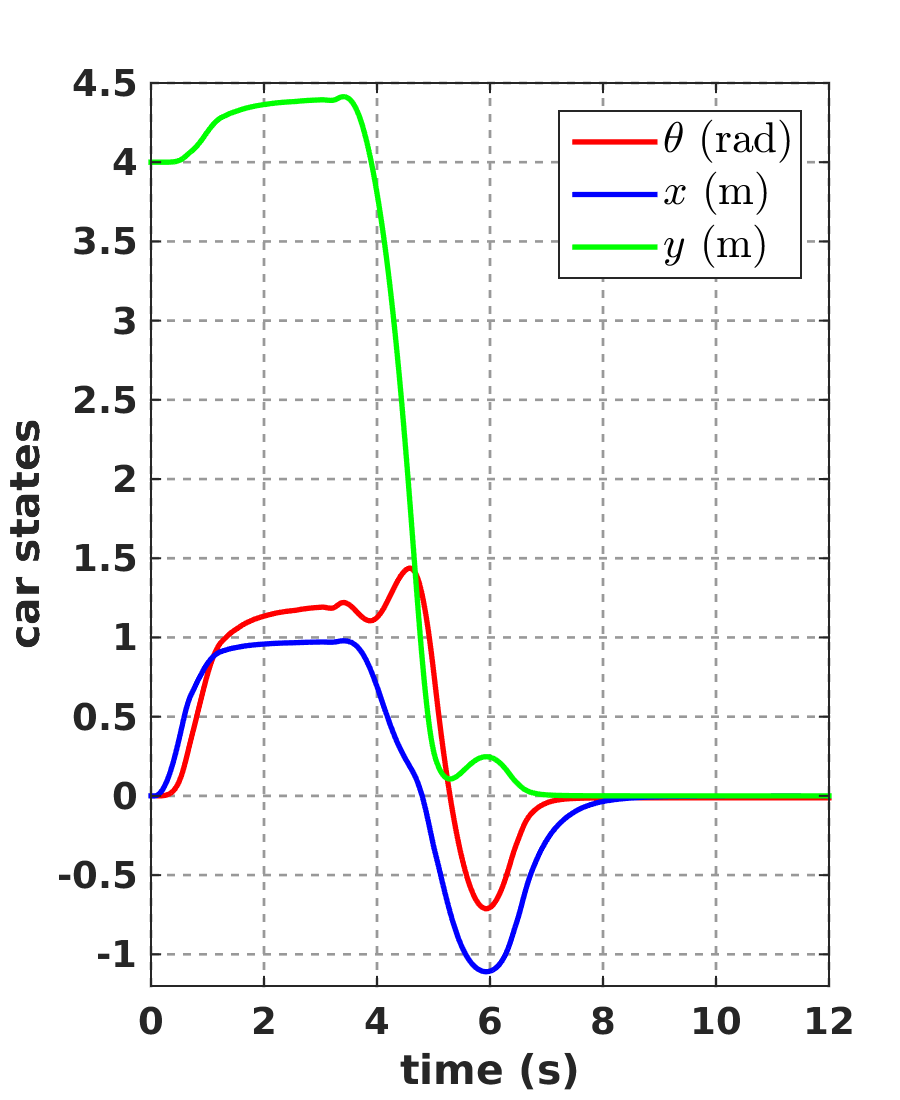}\label{fig::car_parking1}} &
  \hspace{0.1em}
  \subfloat[][]{\includegraphics[trim =4mm 4mm 8mm 6mm,width=0.2\textwidth]{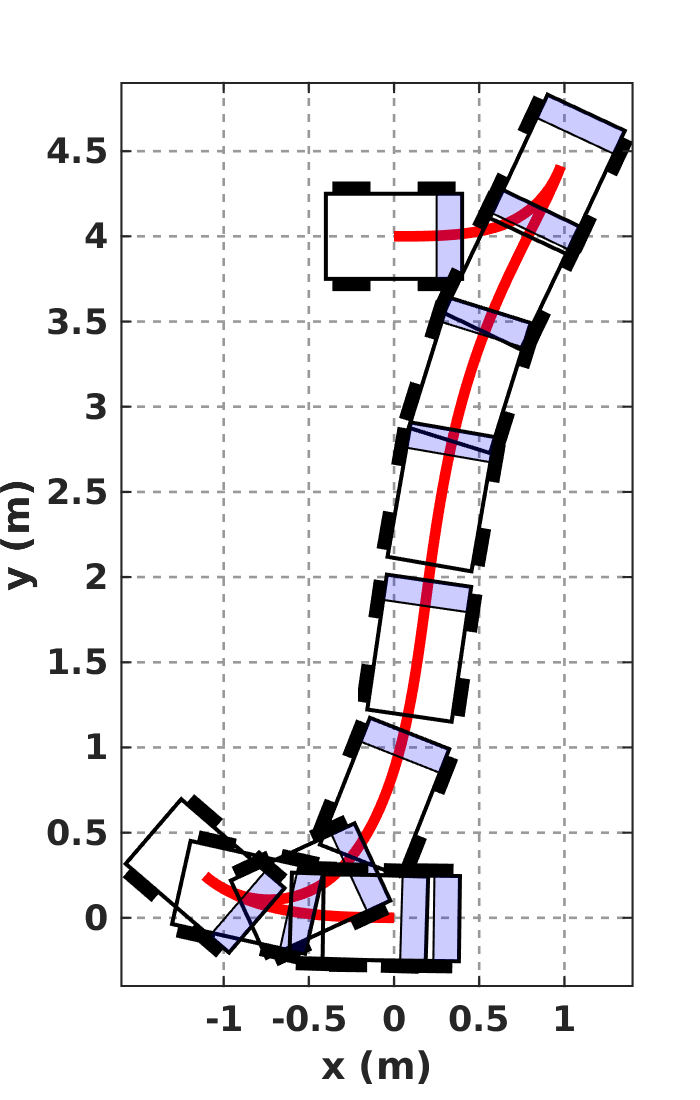}\label{fig::car_parking2}} 
\end{tabular}
%\begin{tabular}{c}
%  \subfloat[][]{\includegraphics[trim =10mm 6mm 12mm 6mm,width=0.49\textwidth]{}\label{fig::car_parking3}}
%\end{tabular}
\caption{Feedback motion planning for parallel parking. The initial state is $\theta_0=0\,\text{rad}$, $x_0=0\,\text{m}$, $y_0=4\,\text{m}$, $\phi_0=0\,\text{rad}$, $\omega_0=0\,\text{rad}/ \text{s}$, $v_0=0\,\text{m}/ \text{s}$ and the desired state is $\theta_d=0\,\text{rad}$, $x_d=0\,\text{m}$, $y_d=0\,\text{m}$, $\omega_d=0\,\text{rad}/ \text{s}$, $v_d=0\,\text{m}/ \text{s}$. It takes $0.4\,\text{s}$ for SAC to successfully plan a trajectory to the desired configuration. The feedback sampling rate is $100$ Hz.\vspace{-1.5em}}  
\label{fig::car_parking}
\end{figure}

We also test SAC with around 45000 initial states randomly sampled from $\theta_0\in[-\pi,\,\pi]\,\mathrm{rad}$, $x_0,\,y_0\in[-10,\,10]\,\mathrm{m}$. In fact, as stated in \cref{subsection::relationship_existing}, rather than make SAC search feedback motion plans all by itself, we prefer to combine it with feedback motion planning algorithms via semidefinite programming. In all of our tests, SAC successfully drives the car from initial states aforementioned into a region of $|\theta_{T}|\leq 0.15\,\mathrm{rad}$, $\|\mathbf{x}_{T}\|\leq 0.35\,\mathrm{m}$, $|v^\parallel_T|\leq 0.02\,\mathrm{m}/\mathrm{s}$ after a computational time of around $0.8\, \mathrm{s}$, which can be covered with significantly fewer control policies by feedback motion planning algorithms via semidefnite programming \cite{jarvis2003some,tedrake2010lqr,majumdar2014convex}.\par

\begin{figure*}
\centering
\vspace{-1em}
\begin{tabular}{ccc}
  \subfloat[][]{\includegraphics[trim =23mm 2mm 5mm 2mm,width=0.32\textwidth]{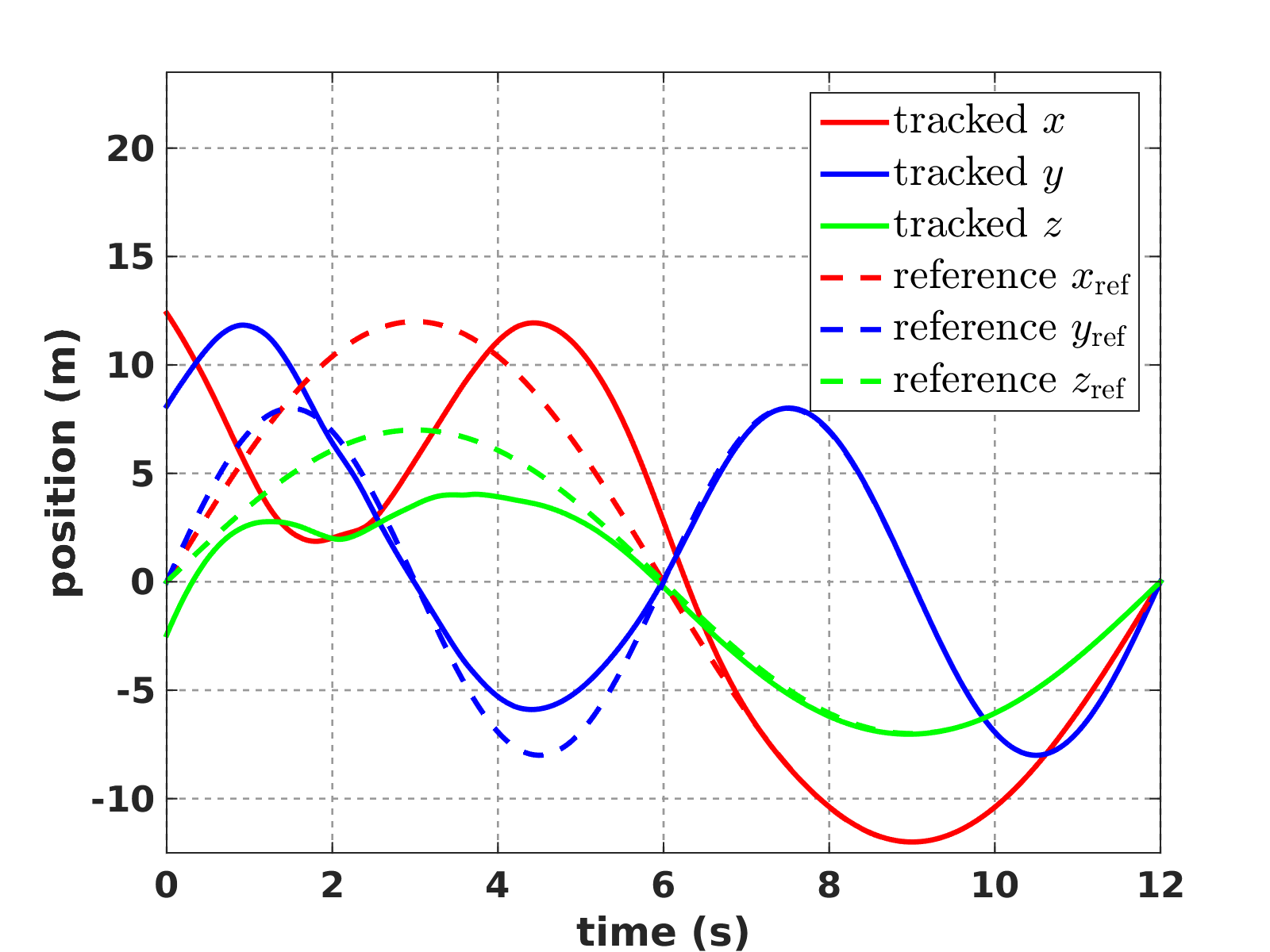}} &
  \hspace{0.5em}
  \subfloat[][]{\includegraphics[trim =23mm 2mm 5mm 2mm,width=0.32\textwidth]{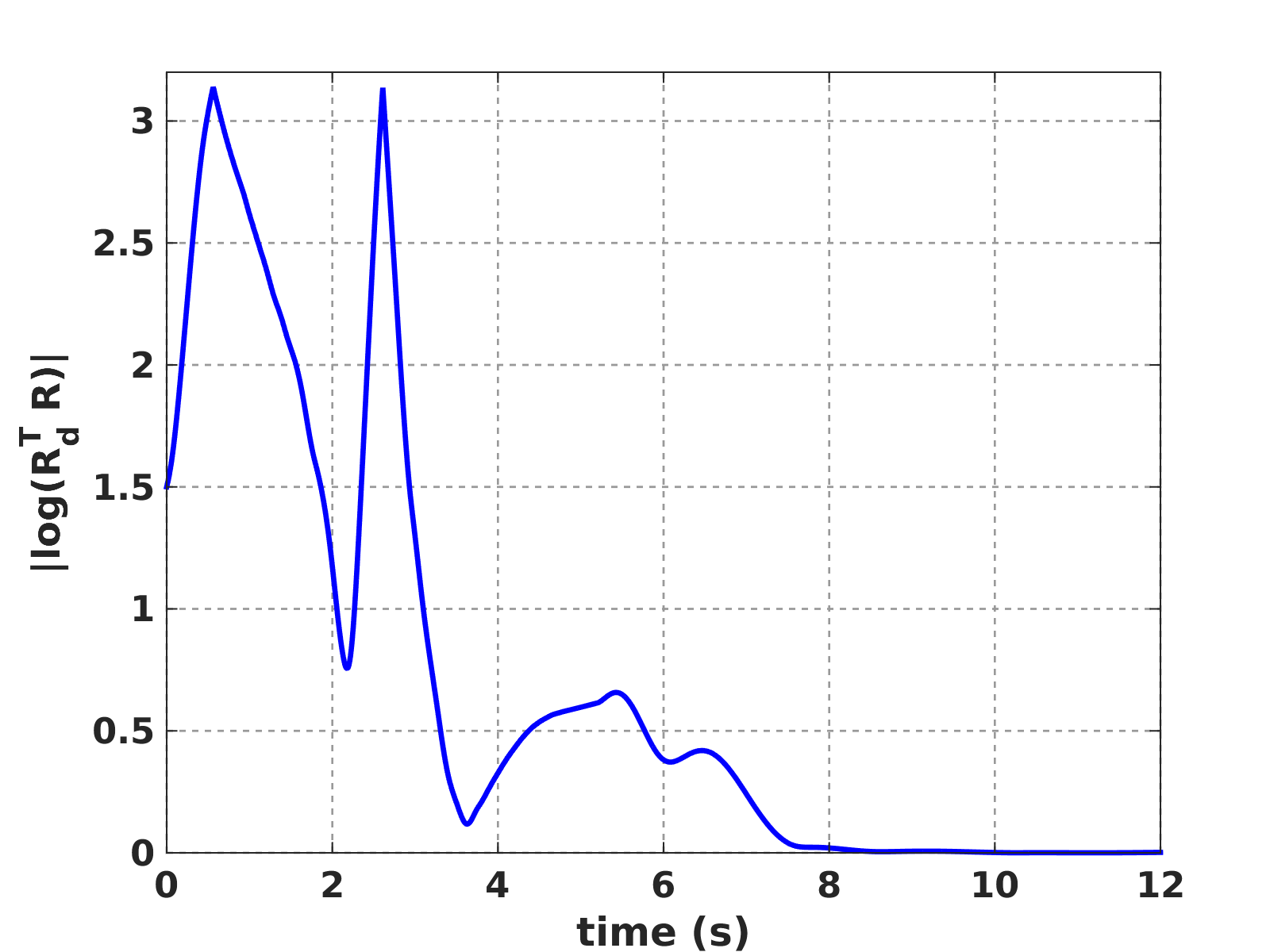}} &
    %\hspace{-0.5em}
  \subfloat[][]{\includegraphics[trim =20mm 2mm 5mm 2mm,width=0.33\textwidth]{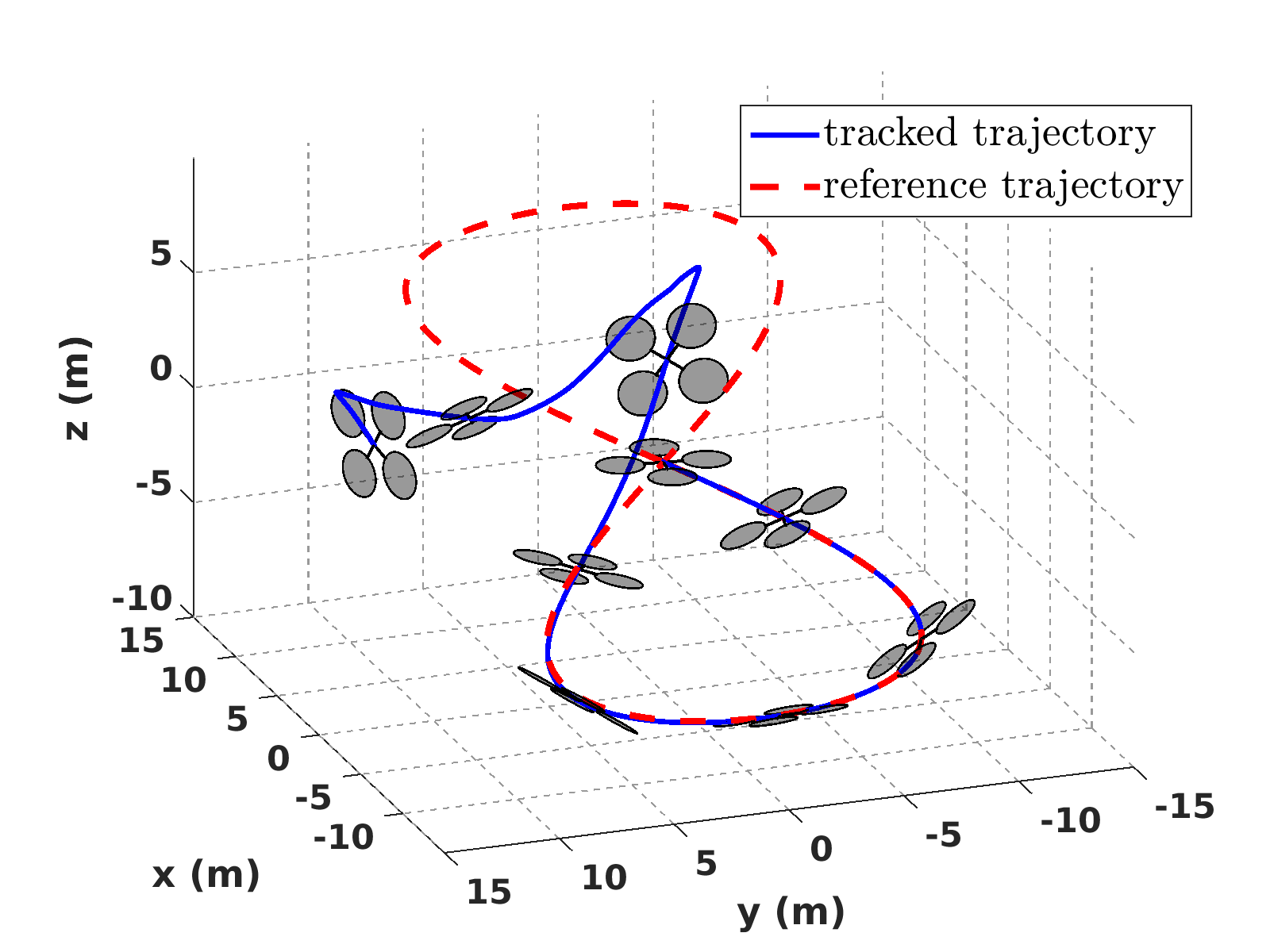}}
\end{tabular}
%\begin{tabular}{c}
%  \subfloat[][]{\transparent{1}\includegraphics[trim =23mm 2mm 23mm 2mm,width=0.9\textwidth]{plot_quad4.png}\label{fig::pendulum_track2}}
%\end{tabular}
\caption{Trajectory tracking of a quadrotor with large initial error. The reference initial state is $\alpha=0$, $\beta=0$, $\gamma=0$, $x_{q}=[0\;0\;0]^T\,\text{m}$,  $\omega=[0.94\;\;-0.18\;\;0.25]^T\,\text{rad}/\text{s}$ and $v_q=[6.28\;\;8.38\;\;3.67]^T\,\text{m}/\text{s}$ while the tracking initial state is $\alpha=1.45$, $\beta=-0.92$, $\gamma=-0.70$ $x_{q}=[12.38\;\;8.10\,\,-2.44]^T\,\text{m}$,  $\omega=[-0.56\;0.90\;3.80]^T\,\text{rad}/\text{s}$ and $v_q=[10.39\;4.17\;4.85]^T\,\text{m}/\text{s}$, where $\alpha$, $\beta$, $\gamma$ are respectively yaw, pitch, roll angles. The resulting performance of position tracking is in (a), orientation tracking in (b) and trajectory tracking in (c). The LQR controller takes effect at $t=5.20\text{ s}$. SAC is around 5.3 times faster than real time. The results are based on a feedback sampling rate of $50$ Hz.\vspace{-0.5em}} 
\label{fig::quadrotor} 
\end{figure*}
\begin{figure*}
\centering
\vspace{-1em}
\begin{tabular}{ccc}
  \subfloat[][$t=18\,\mathrm{s}$]{\includegraphics[trim =23mm 2mm 5mm 2mm,width=0.32\textwidth]{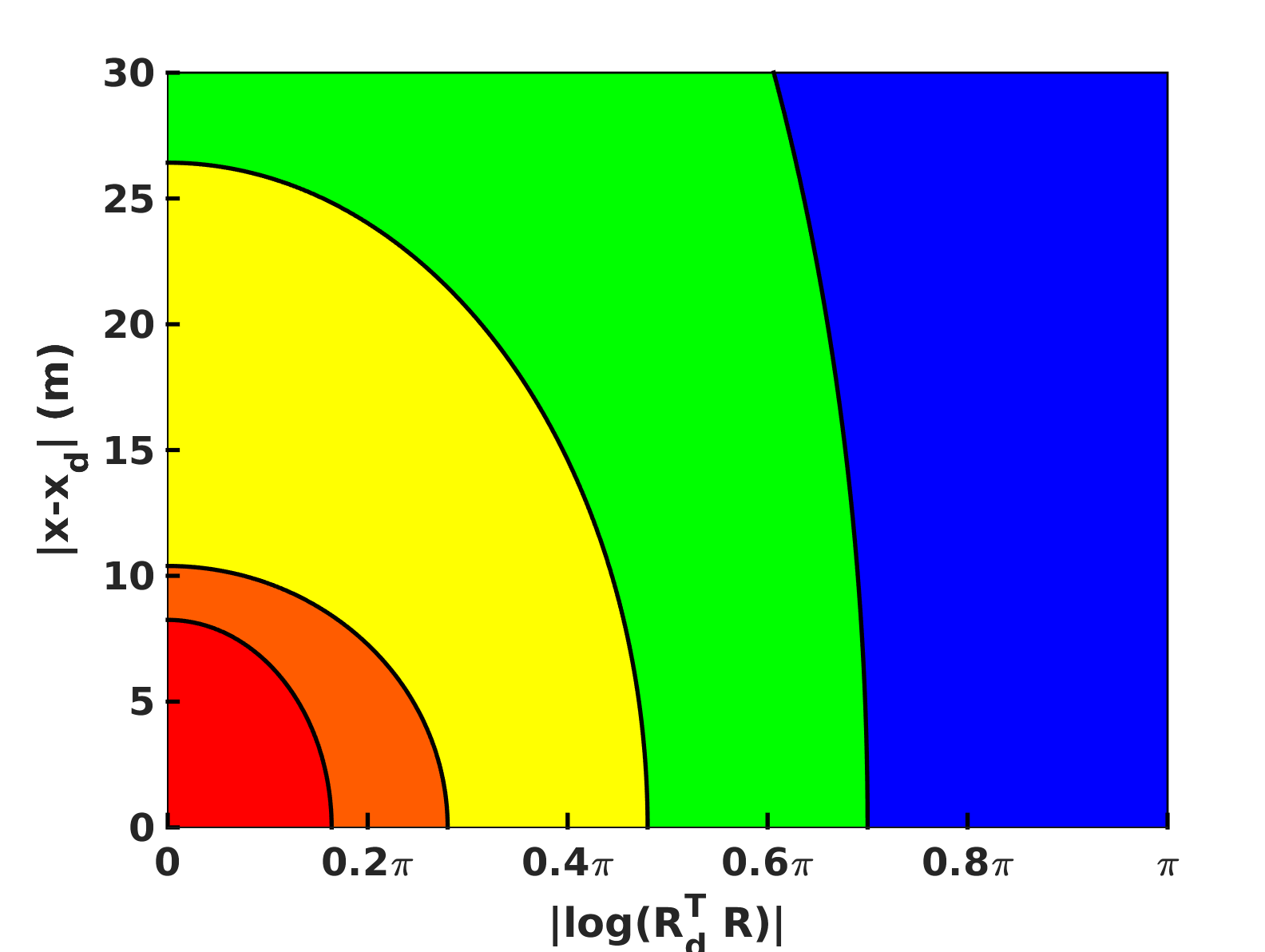}\label{fig::roa_1}} &
  %\subfloat[][$t=24\,\mathrm{s}$]{\includegraphics[trim =23mm 2mm 5mm 2mm,width=0.32\textwidth]{}} &
  \subfloat[][$t=30\,\mathrm{s}$]{\includegraphics[trim =23mm 2mm 5mm 2mm,width=0.32\textwidth]{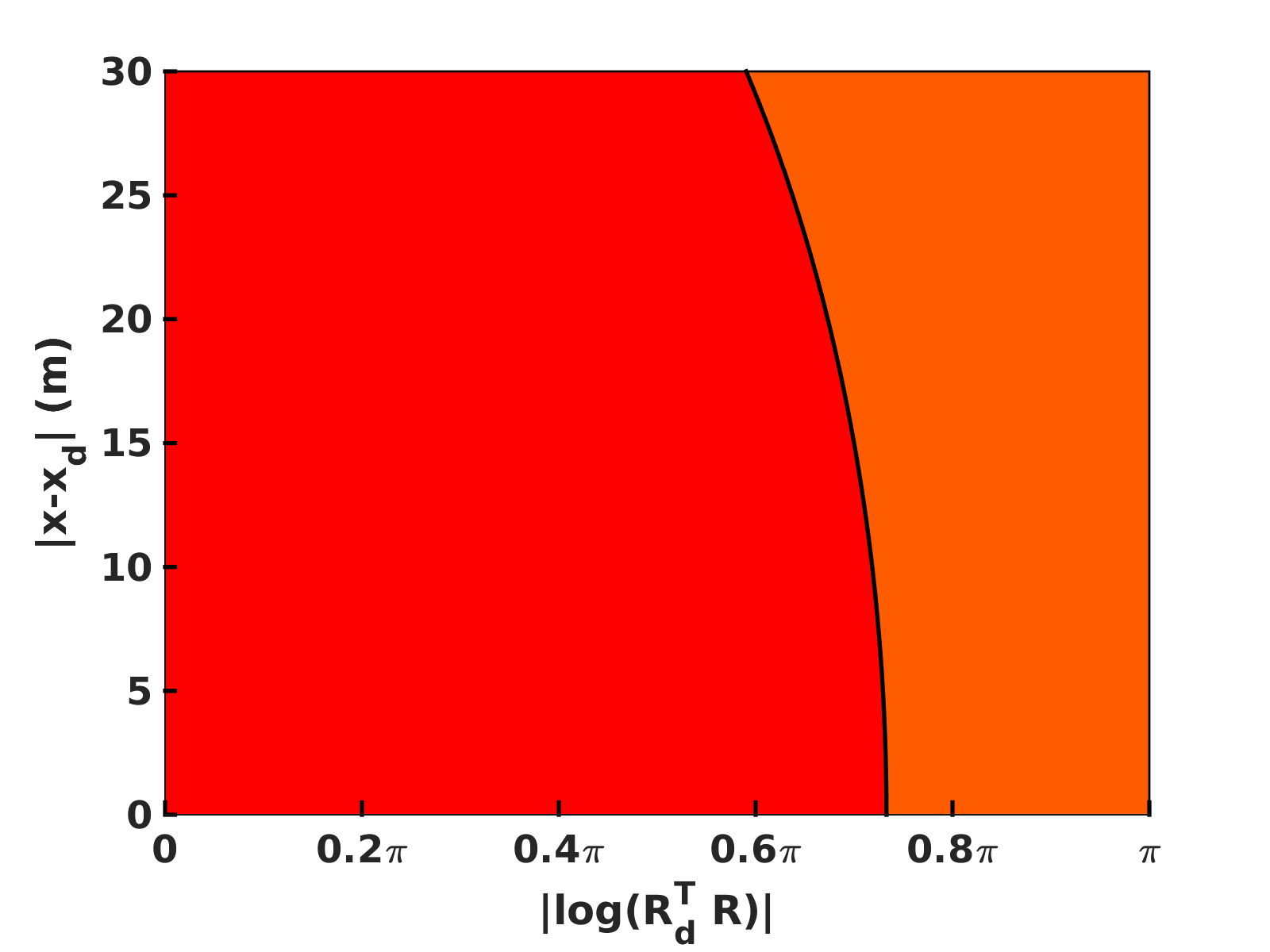}\label{fig::roa_2}}&
    \subfloat[][]{\includegraphics[trim =21mm 0mm 2mm 0mm,width=0.32\textwidth]{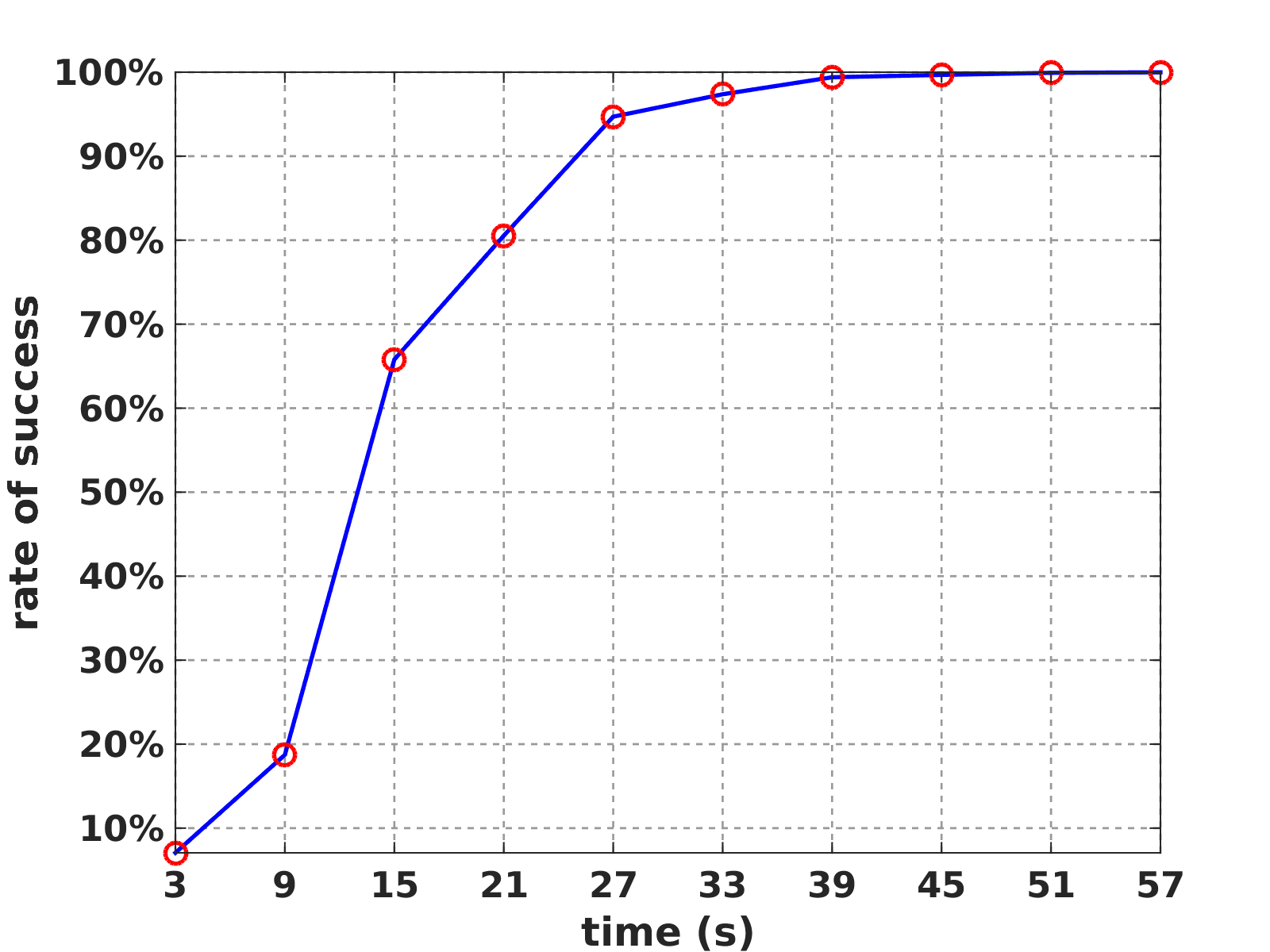}\label{fig::roa_3}} 

\end{tabular}
%\begin{tabular}{c}
%  \subfloat[][]{\transparent{1}\includegraphics[trim =23mm 2mm 23mm 2mm,width=0.9\textwidth]{plot_quad4.png}\label{fig::pendulum_track2}}
%\end{tabular}
\caption{Statistics of SAC for quadrotor control with 24000 random initial states. In (a) and (b) different colors denote rates of success converging to the region of attraction of LQR at simulation times of $t=18\,\mathrm{s}$ and $30\,\mathrm{s}$ for different initial errors where red $\geq 95\%$, orange $\geq 90\%$, yellow $\geq 80\%$, green $\geq 70\%$, blue $\geq 60\%$ while (c) is the overall rate of success w.r.t. different simulation times. At $t=63.4\,\mathrm{s}$, SAC successfully drives the quadrotor to the region of attraction of LQR for all of the 24000 initial states. Note herein the region of attraction of LQR refers to that in the presence of actuation limits.\vspace{-0.9em}} 
\label{fig::attraction} 
\end{figure*}
\subsection{Example 2: The 3D Quadrotor}\label{subsection::quadrotor}
The quadrotor is an underactuated system evolving on $h=(R,\,p)\in SE(3)$ and the dynamics can be formulated as
\begin{subequations}\label{eq::quadrotor}
\begin{equation}
\dot{h}=h\begin{bmatrix}
\hat{\omega} & v\\
\0 & 0
\end{bmatrix},
\end{equation}
\begin{equation}
J\dot{\omega}=M+J\omega\times\omega,
\end{equation}
\begin{equation}
\dot{v}=\frac{1}{m}Fe_3-\omega\times v-gR^Te_3
\end{equation}
\end{subequations}
where $\omega,\,v\in\R^3$ are respectively the body-fixed angular and linear velocities, $F$ and $M$ are forces and torques exerted on the quadrotor associated with control inputs $u_i$ ($i=1,\,2,\,3,\,4$) by
\begin{equation}\label{eq::quadrotor_u}
\begin{aligned}
F&=k_t(u_1^2+u_2^2+u_3^3+u_4^2),\\
M&=\begin{bmatrix}
k_tl(u_2^2-u_4^2)\\
k_tl(u_3^2-u_1^2)\\
k_m(u_1^2-u_2^2+u_3^3-u_4^2)
\end{bmatrix},
\end{aligned}
\end{equation}
and $k_t$, $k_m$, $l$ are model parameters. The dynamics (\cref{eq::quadrotor}) used herein are derived on $SE(3)$ and it is different from that on $SO(3)\times\R^3$ due to the distinct Lie group structure\cite{lee2010geometric}.\par

Since in \cref{eq::quadrotor_u} $u_i^2\geq 0$ always holds, values of $M$ and $F$ are no longer arbitrary. Though numbers of papers develop various quadrotor controllers, most of them are either NMPC-based or assume $M$ and $F$ can be any values, which may not be implemented online with input saturation. %LQR controllers linearizing the quadrotor dynamics (\cref{eq::quadrotor,eq::quadrotor_u}) can guarantee $M$ and $F$ satisfy \cref{eq::quadrotor_u}, but with a smaller basin of attraction.\par 

The fact that the quadrotor dynamics is an affine system with $u_i^2\in [0,+\infty)$ means that SAC on Lie groups can be implemented to saturate control actions. In this example, we use the combination of SAC and LQR on a quadrotor with $m=0.6\,\text{kg}$, $J=\text{diag}\{ 0.04,\,0.0375,\,0.0675\}\,\text{kg}\cdot \text{m}^2$, $k_t=0.6$, $k_m=0.15$, $l=0.2\,\text{m}$ and $u_i^2\in[0,\,6]$ for $i=1,\,2,\,3,\,4$. The LQR is employed only when the tracking error is below a threshold of $\|\log(g_d^{-1}g)\|^2+\|\omega-\omega_d\|^2+\|x-x_d\|^2\leq 6^2$ and the control inputs given by LQR satisfy the actuation limits, otherwise SAC is applied. The reference control inputs are obtained by differential flatness \cite{mellinger2011minimum} and the flat outputs are $x(t)=12\sin(\frac{\pi t}{6})\,\mathrm{m}$, $y(t)=8\sin(\frac{\pi t}{3})\,\mathrm{m}$, $z(t)=7\sin(\frac{\pi t}{6})\,\mathrm{m}$ and $\alpha=\frac{1}{2}\sin(\frac{t}{2})\,\mathrm{rad}$. \cref{fig::quadrotor} is a result of trajectory tracking while \cref{fig::attraction} are the statistics of 24000 trials whose initial states are randomly sampled from $\|x-x_d\|\in[0,\,30]\,\mathrm{m}$, $\|\log(R_d^{T}R)\|\in[0,\,\pi]$ and in all of which the combination of SAC and LQR stabilizes the quadrotor to the reference trajectory in a simulation duration of $72\,\mathrm{s}$ with saturated control inputs.\par
\begin{figure}[H]
\centering
\vspace{-0.65em}
\begin{tabular}{c}
  \subfloat[][]{\includegraphics[trim =12.5mm 2mm 5mm 12.5mm,width=0.37\textwidth]{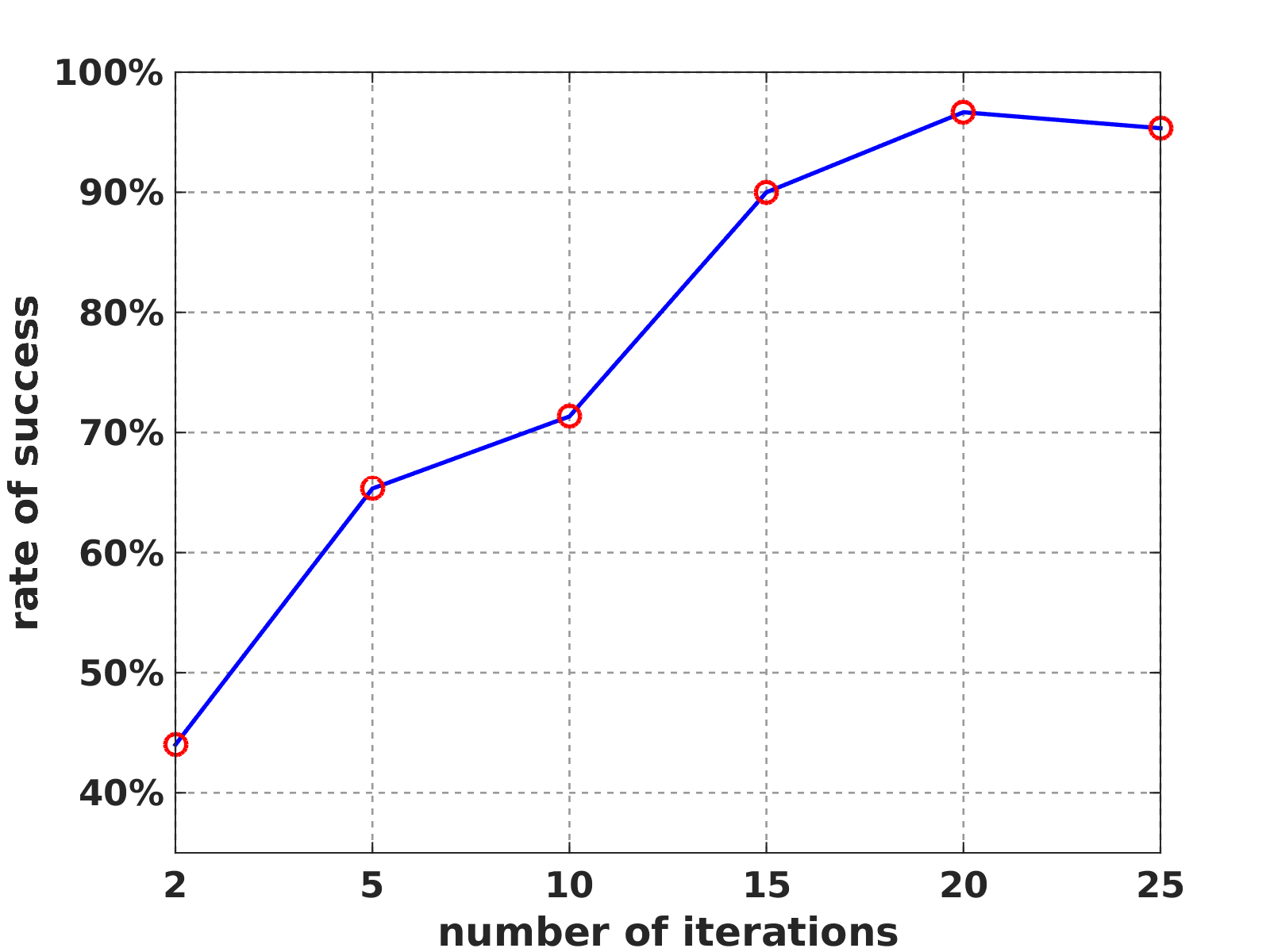}\label{fig::ilqg_1}} \\
  \subfloat[][]{\includegraphics[trim =12.5mm 2mm 5mm 12.5mm,width=0.37\textwidth]{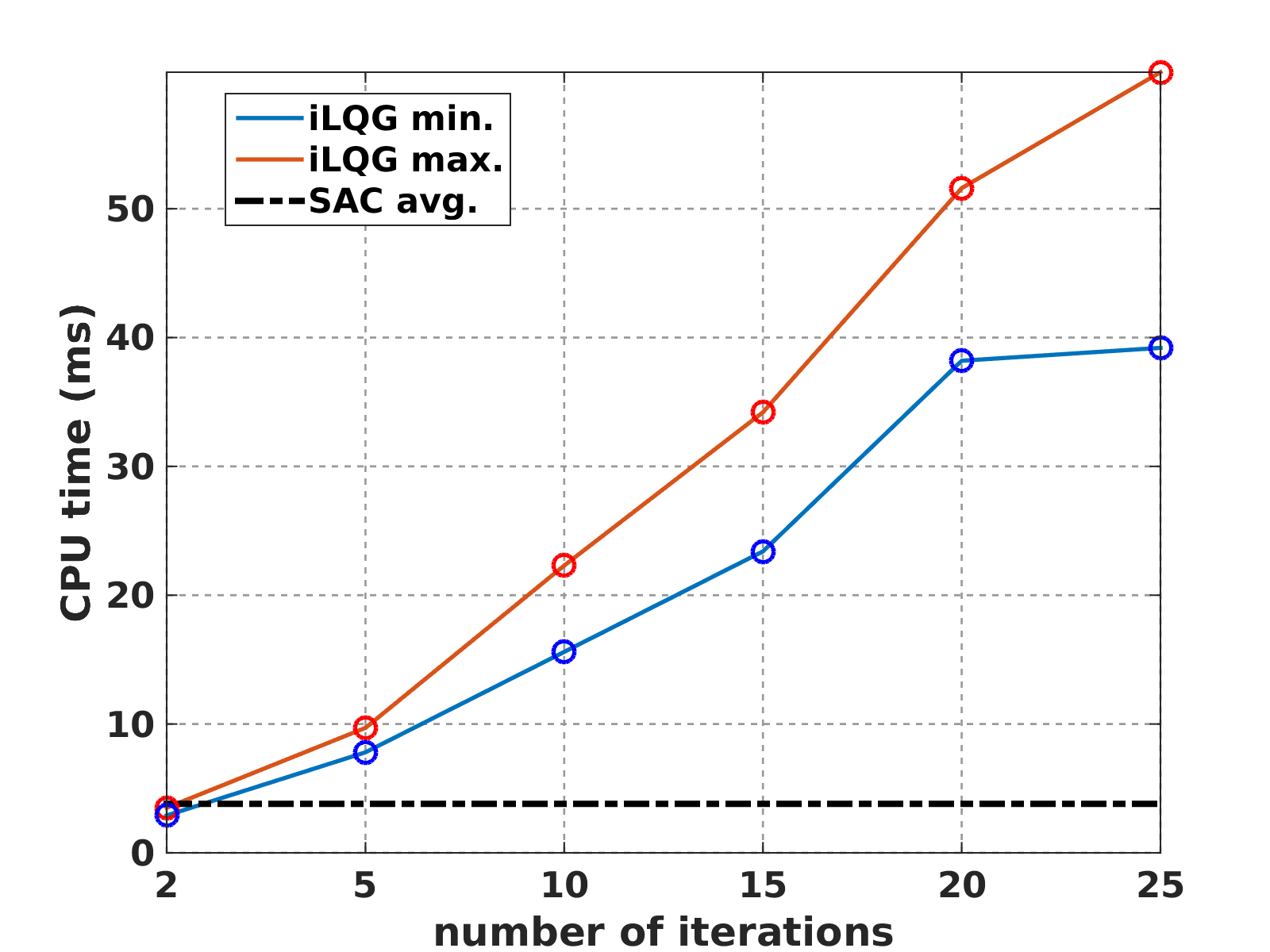}\label{fig::ilqg_2}}
\end{tabular}
\caption{{The performance of iLQG under different number of iterations. The stability is tested with 150 random initial states with $\|x-x_d\|=30$ m and $\|\log(g_d^{-1}g)\|=3$ rad and the rate of success of iLQG is shown in (a). The computational time of iLQG with different numbers of iterations is in (b) where the black dotted line represents the average computational time of SAC. The computational time of SAC is around $3.8\sim 4.1$ ms.}} \label{fig::ilqg}
\vspace{-0.7em}
\end{figure}
%\subsection{Comparison: iLQG and SAC for 3D Quadrotor}
{We implement iLQG \cite{todorov2005generalized,tassa2014control} for trajectory tracking for the purpose of comparison. The iLQG method is one of the most efficient NMPC methods. As is shown in \cref{fig::attraction,fig::ilqg}, SAC outperforms iLQG in both basin of attraction and computational efficiency for cases with large initial errors. Though iLQG achieves a relatively satisfactory performance with 20 iterations, it takes $45$ ms for computation and still has failures. As a comparison, SAC takes around $4$ ms and stabilizes quadrator in all of the 24000 trials including these with large initial errors (\cref{fig::attraction}).}\par
\begin{table}[H]
    \begin{center}
    \vspace{-0.5em}
    \begin{tabular}{|c|c|c|c|}
        \hline
        \multirow{2}{*}{} & {\multirow{2}{*}{SAC}} &\multicolumn{2}{c|}{iLQG}\\
        \cline{3-4}
        &  & SNOPT & Projected-Newton \\
        \hline
        \multirow{2}{*}{CPU time} & \multirow{2}{*}{$3.8 \sim 4.1$ ms} &\multirow{2}{*} {$217\sim 236$ ms} &\multirow{2}{*} {$37\sim 51$ ms} \\
        & & & \\
        \hline
    \end{tabular}
    \end{center}
    \caption{{Computation time of SAC and iLQG. In our tests, the number of iterations in iLQG is set to 20 to obtain an accepted performance, whereas no iterations are needed in SAC.}\vspace{-0.8em}}\label{compare1}
\end{table}
{Usually iLQG needs a quadratic optimizer to saturate control inputs. We tested both SNOPT \cite{gill2005snopt} and the Projected-Newton method \cite{tassa2014control, bertsekas1982projected} for quadratic optimization in iLQG. As noted in \cref{compare1}, SAC is much faster than iLQG using both optimizers. We can also find that the performance of iLQG is severely affected by the chosen optimizers while SAC saturates control inputs in closed form and no optimizers are needed.}

\section{Conclusion}\label{section::conclusion}
In this paper, we propose an approach to designing online feedback controllers with input saturation for nonlinear systems evolving on Lie groups by extending sequential action control, %In addition, the mode insertion and transition gradients for switching time optimization on Lie groups and exact expressions of $\d\exp(\xi)$ and $\d\exp^{-1}(\xi)$ are given. 
which demonstrates a large region of attraction for the kinematic car and quadrotor model though formal proofs of global stability are still needed. In addition, the associated mode insertion and transition gradients are derived and exact expressions of $\d\exp(\xi)$ and $\d\exp^{-1}(\xi)$ for some common Lie groups are given.
\bibliographystyle{plainnat}
\bibliography{mybib}
\end{document}